\numberwithin{equation}{section}
\newcommand{\N}{\mathbb{N}}
\newcommand{\R}{\mathbb{R}}
\newcommand{\Indi}[1]{\mathbbm{1}_{#1}}
\newcommand{\Comb}[1]{\left(\begin{array}{c}#1\end{array}\right)}
\newcounter{dummy} \numberwithin{dummy}{section}
\newtheorem{Definition}[dummy]{Definition}
\newtheorem{Theorem}[dummy]{Theorem}
\newtheorem{Lemma}[dummy]{Lemma}
\newtheoremstyle{DefinitionStyle}  
	{9pt}				
	{9pt}				
	{}					
	{0pt}				
	{\bfseries}		    
	{.}					
	{3pt}				
	{}					
\theoremstyle{DefinitionStyle}
\newtheorem{definition}{Definition}
\newtheorem{lemma}{Lemma}
\newtheorem{proposition}{Proposition}
\newtheorem{theorem}{Theorem}
\newtheorem{corollary}{Corollary}
\newtheorem{remark}{Remark}
\def\1{{\rm l}\hskip -0.21truecm 1}
\newcommand{\vertiii}[1]{{\left\vert\kern-0.25ex\left\vert\kern-0.25ex\left\vert #1 
    \right\vert\kern-0.25ex\right\vert\kern-0.25ex\right\vert}}
\DeclareMathOperator\supp{Supp}
\def\R{\mathbb{R}}
\def\dilation{\mathfrak{D}}
\def\dh2l{\mathbf{d}_{\mathbb{H}_{2\ell}}}
\def\d2{\mathbf{d}_2}
\begin{document}

\title[Non-commutative Stein method]{Non-commutative Stein's Method: Applications to Free Probability and Sums of Non-commutative Variables}
\author{Mario D\'iaz\textsuperscript{\textdagger} and Arturo Jaramillo}
\address{Mario D\'iaz\textsuperscript{\textdagger}: Instituto de Investigaciones en Matem\'aticas Aplicadas y en Sistemas, Universidad Nacional Aut\'onoma de M\'exico, Mexico City, Mexico.}
\email{mario.diaz@sigma.iimas.unam.mx}
\address{Arturo Jaramillo: Centro de Investigaci\'on en Matem\'aticas, Jalisco S/N, Col. Valenciana 36023 Guanajuato, Gto.}
\email{jagil@cimat.mx}
\date{\today}

\thanks{\textsuperscript{\textdagger} Mario D\'iaz passed away during the preparation of this manuscript. This work is based on joint efforts with him, and it is dedicated to his memory.}

\begin{abstract}
We present a straightforward formulation of Stein's method for the semicircular distribution, specifically designed for the analysis of non-commutative random variables. Our approach employs a non-commutative version of Stein's heuristic, interpolating between the target and approximating distributions via the free Ornstein-Uhlenbeck semigroup. A key application of this work is to provide a new perspective for obtaining precise estimates of accuracy in the semicircular approximation for sums of weakly dependent variables, measured under the total variation metric. We leverage the simplicity of our arguments to achieve robust convergence results, including: (i) A Berry-Esseen theorem under the total variation distance and (ii) Enhancements in rates of decay under the non-commutative Wasserstein distance towards the semicircular distribution, given adequate high-order moment matching conditions. 

\bigskip
\noindent \textbf{MSC Classes:} 60F05, 46L53, 60G50, 60B10 
\end{abstract}

\maketitle

\section*{Dedication}
This work is dedicated to the memory of Mario D\'iaz, whose brilliant contributions and collaborative spirit were integral to this research. The majority of this manuscript was developed in close collaboration with him. While he was unable to review the final version, his vision and expertise shaped the foundation of this work. 


\section{Introduction}	
Let $\mu^{n}=\{\mu_{i,n}\ ;\ i\geq 1\}$ be a collection of centered probability measures with finite moments of order three and denote by $\sigma_{i,n}^2$ the variance of $\mu_{i,n}$. A fundamental problem in the theory of probability consists of studying the asymptotic properties of the inhomogeneous $n$-convolution
\begin{align}\label{eq:nundefintro}
\nu_{n}
  &:=\mu_{1,n}*\cdots*\mu_{n,n}.
\end{align}
This topic is of great mathematical relevance, particularly in the study of the law of large numbers, large deviations, and central limit theorems (CLTs), with the former topic being at the center of our discussion. Among the various approaches to CLTs, we adopt a probabilistic perspective based on Stein’s method, deliberately minimizing reliance on analytic tools. Our goal is to adapt these concepts to the framework of free independence, drawing parallels with classical theory, hereafter referred to as the ``tensorial regime". The development of this free Stein method will enable the transfer of constructions and theorems from tensorial theory to its free counterpart. Surprisingly, this transfer principle will produce more robust results in the domain of free, non-commutative variables than in the tensorial setting. As key applications of our theory, we examine the following:
\begin{enumerate}
\item[-] The quantification of accuracy in the inhomogeneous free Berry-Esseen theorem under the total variation metric for weakly dependent measures, which, in the homogeneous case, achieves a rate of order $1/\sqrt{n}$.
\item[-] A sharp enhancement in the inhomogeneous free Berry-Esseen theorem for summands with matching moments up to order $q$ under the non-commutative Wasserstein distance (see Sections \ref{freeberryessensec} and \ref{sec:Kantrubdistance} for definitions). In the homogeneous case, this result gives a rate of order $n^{-(q-1)/2}$.
\end{enumerate}
Beyond providing a robust, abstract description of the phenomenology arising when multiple small free random variables are added, the authors particularly appreciate this manuscript for presenting the argumentation in a manner that feels both friendly and natural to the classical tensorial perspective, further enhancing its already well-understood combinatorial reasoning based on the cumulant transform.\\
  
  \noindent Having anticipated the diversity of results to be expected after a thorough examination of our manuscript and aiming to keep the narrative simple at first,  we would like to invite the reader to regard the two corollary stated bellow, dealing with improvements in the free Berry-Esseen theorem, as our fundamental motivation, and realize the generalizations to weakly dependent measures as a natural extension, which, although slightly more technical, will follow the same line of reasoning. In the sequel, $\boxplus$ will denote the free convolution operation and $\mathbf{s}$ the standard semicircle distribution (see Section \ref{sec:noncommelementsofnon} for further details). For any probability distribution $\nu$, we will denote by $m_{k}[\nu]$ the $k$-th moment of $\nu$ and by $\mathfrak{g}[\nu]$, the unique semicircular distribution with the same mean and variance as $\nu$.  

\begin{corollary}[Special case of Theorem \ref{thm:inhomogeneousBerryEsseen}]\label{corollaryatthebeginning}
Let $ \mu_{k,n}$, with $k,n\geq 1$ be centered probability measures with uniformly bounded supports, such that 
\begin{align*}
\sum_{k=1}^{n}\int_{\R}x^2\mu_{k,n}(dx)=1,
\end{align*}
for all $n\geq 1$. Then, we can find a  constant $C>0$, independent of $n$, such that for large enough $n$, 
\begin{equation}\label{ineq:Berryesseenclassical}
d_{TV}(\mu_{1,n}\boxplus\cdots\boxplus \mu_{n,n},\mathbf{s}) \leq C\sum_{k=1}^nm_{3}[\mu_{k,n}],
\end{equation}
where $d_{TV}$ denotes distance in total variation. In particular, if the $\mu_{k,n}$ are constant over $k$, 
\begin{equation}\label{ineq:Berryesseenclassical2}
d_{TV}(\mu_{1,n}\boxplus\cdots\boxplus \mu_{n,n},\mathbf{s}) \leq Cn^{-1/2}.
\end{equation}
\end{corollary}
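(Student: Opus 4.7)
The plan is to view this corollary as a direct consequence of Theorem \ref{thm:inhomogeneousBerryEsseen}, itself obtained from the non-commutative Stein-method apparatus developed in the body of the paper. The backbone of the argument is a Stein-type characterization of the standard semicircle $\mathbf{s}$: it is the unique centered probability measure of variance one annihilated, when tested against polynomials, by the generator $\mathcal{L}$ of the free Ornstein--Uhlenbeck semigroup. This reduces the comparison of $\nu_n:=\mu_{1,n}\boxplus\cdots\boxplus \mu_{n,n}$ with $\mathbf{s}$ to the estimation of $\int \mathcal{L}f\,d\nu_n$ for solutions $f$ of the Stein equation $\mathcal{L}f=h-\int h\,d\mathbf{s}$, where $h$ ranges over a class of test functions rich enough to resolve $d_{TV}$.

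The estimate of $\int\mathcal{L}f\,d\nu_n$ is performed via a telescoping interpolation along the free Ornstein--Uhlenbeck semigroup $(T_t)_{t\ge 0}$, through the identity
\begin{equation*}
\int h\,d\nu_n-\int h\,d\mathbf{s}=-\int_0^\infty\frac{d}{dt}\Bigl(\int T_t h\,d\nu_n\Bigr)dt.
\end{equation*}
The integrand is then decomposed summand by summand by realizing $\nu_n$ as the distribution of $X_1+\cdots+X_n$ with $X_1,\ldots,X_n$ freely independent self-adjoint operators, $X_k\sim \mu_{k,n}$. Since $\mu_{k,n}$ is centered, the zero-th and first free cumulants of $X_k$ already agree with those of the free semicircular variable $\mathfrak{g}[\mu_{k,n}]$, so the first surviving contribution when $X_k$ is replaced by $\mathfrak{g}[\mu_{k,n}]$ is of third order. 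The uniform boundedness of the supports, together with $\sum_{k=1}^n \sigma^2_{k,n}=1$, keeps the operator norm of $X_1+\cdots+X_n$ controlled, and a careful bookkeeping of the third-order remainders yields a per-summand contribution of order $m_3[\mu_{k,n}]$, which after summing over $k$ gives \eqref{ineq:Berryesseenclassical}. The bound \eqref{ineq:Berryesseenclassical2} then follows from \eqref{ineq:Berryesseenclassical} by the scaling of $m_3[\mu_{k,n}]$ in the homogeneous regime, where the common summand is a $1/\sqrt{n}$-dilation of a fixed measure.

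The main obstacle, and the one that requires the bounded-support hypothesis, is the passage from a smooth-test-function bound---the natural output of the Stein equation---to the total variation metric. I would handle this through a regularization argument: the compactness of the supports, combined with known regularity results for free convolutions of compactly supported measures, implies that $\nu_n$ admits a bounded density on its bulk, which permits dualizing the Stein estimate against indicators of Borel sets. The technical cost of this step, together with the need to control $\mathcal{L}$-images of Stein solutions attached to non-smooth test functions $h$, is what fixes the admissible value of the constant $C$ and is expected to constitute the most delicate portion of the proof.
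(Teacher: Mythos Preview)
Your high-level skeleton---semigroup interpolation along the free Ornstein--Uhlenbeck flow, followed by a Lindeberg-type replacement of each free summand by its semicircular match, with superconvergence controlling the support---is the paper's strategy. Two points, however, need sharpening.

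First, you conflate two Stein-method formulations. Your first paragraph proposes solving $\mathcal{L}f=h-\int h\,d\mathbf{s}$ for a function $f$ and then estimating $\int\mathcal{L}f\,d\nu_n$; the paper explicitly says it could not make this work, because the existence and regularity of such an $f$ are unclear in the non-commutative setting. Instead it adopts the \emph{dual} formulation $\mathcal{S}_{\boxplus}^{*}$ acting on measures (your second paragraph's $\int_0^\infty\frac{d}{dt}(\cdots)\,dt$ is precisely this). So keep the second paragraph and drop the first.

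Second, and more seriously, your phrase ``careful bookkeeping of the third-order remainders'' hides the main technical idea. A free Taylor expansion of $Dh(F_{\theta,n})$ around the perturbed sum $F_{\theta,n}^{V}$ is not tractable for general smooth $h$: iterated free difference quotients are multi-variable operator-valued maps with no a~priori control. The paper's device is to invoke the Cauchy integral formula on a rectangle $\mathcal{R}$ enclosing the (superconvergence-confined) support, thereby reducing everything to the single resolvent function $g(x)=(z-x)^{-2}$. For $g$ there is an \emph{exact} algebraic telescoping identity (Lemma~\ref{lemma:atechone}) that plays the role of the non-commutative Taylor expansion with explicit remainder. Without this resolvent reduction your bookkeeping step has no content. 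A side benefit is that after the Cauchy representation the test function $h$ enters the estimate only through $\sup_{z\in\mathcal{R}}|h(z)|$, so the bound depends solely on $\|h\|_\infty$; the passage to $d_{TV}$ is then immediate by density (both $\nu_n$ and $\mathbf{s}$ being absolutely continuous), not the ``most delicate portion'' you anticipate. The real difficulty sits in the non-commutative differentiation, and the Cauchy/resolvent trick is what resolves it.
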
  
Further improvements in the rate can be achieved by changing the metric $d_{TV}$ by the non-commutative 1-Wasserstein distance $d_{W}$ and imposing a suitable restriction over the moments of the measures $\mu_{i,j}$.  
\begin{corollary}[Special case of Theorem  
\ref{thm:inhomogeneousBerryEsseensecond}]\label{corollaryatthebeginning}
Let $ \mu_{k,n}$, with $k,n\geq 1$ be centered probability measures with uniformly bounded supports, such that 
\begin{align*}
\sum_{k=1}^{n}\int_{\R}x^2\mu_{k,n}(dx)=1,
\end{align*}
for all $n\geq 1$. Denote by $q\geq 1$ be the largest integer such that $m_{j}[\nu]=m_{j}[\mathfrak{g}[\nu]]$, for all $j\leq q$. Then, we can find a  constant $C>0$, independent of $n$, such that for large enough $n$, 
\begin{equation}\label{ineq:Berryesseenclassical12}
d_{W}(\mu_{1,n}\boxplus\cdots\boxplus \mu_{n,n},\mathbf{s}) \leq C\sum_{k=1}^nm_{q+1}[\mu_{k,n}].
\end{equation}
In particular, if the $\mu_{k,n}$ are constant over $k$, 
\begin{equation}\label{ineq:Berryesseenclassical22}
d_{W}(\mu_{1,n}\boxplus\cdots\boxplus \mu_{n,n},\mathbf{s}) \leq Cn^{-(q-1)/2}.
\end{equation}
\end{corollary}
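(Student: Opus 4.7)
The approach is to apply the free Stein method for the semicircular law, developed earlier in the paper, and to refine the standard Berry-Esseen-type bound by exploiting the moment-matching hypothesis to postpone the first nontrivial contribution to order $q+1$. Realize the free convolution as $S_n = X_1 + \cdots + X_n$ for freely independent self-adjoint variables $X_k \sim \mu_{k,n}$ in a tracial non-commutative probability space $(\Ac, \tau)$. The hypotheses give $\tau(S_n) = 0$ and $\tau(S_n^2) = 1$, so that $\mathbf{s}$ is the natural semicircular candidate, and the bounded supports of the $\mu_{k,n}$ yield a uniform bound $\sup_k \Norm{X_k}_{\mathrm{op}} \leq R$ that will be used throughout.

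The non-commutative $1$-Wasserstein distance is realized as $d_{W}(\mathrm{law}(S_n), \mathbf{s}) = \sup_{f} |\tau(f(S_n)) - \tau_{\mathbf{s}}(f)|$ over $1$-Lipschitz test functions. For such an $f$, solve the Stein equation $\Lc g_f = f - \tau_{\mathbf{s}}(f)$, where $\Lc$ is the generator of the free Ornstein-Uhlenbeck semigroup, invoking the regularity bounds on $g_f$ and its iterated free difference quotients $\partial^j g_f$ established in the main body. It then suffices to bound $|\tau(\Lc g_f(S_n))|$ uniformly over $f$. Decompose this quantity by contributions indexed by $k$: using free independence of $X_k$ from $S_n^{(k)} := S_n - X_k$, each summand is expressed, via free integration by parts, as a trace of a smooth functional of $S_n^{(k)}$ tested against polynomial expressions in $X_k$.

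The key new input is a free-Taylor expansion in the variable $X_k$ of each localized piece, pushed to order $q$. The coefficients in this expansion reassemble into the moment differences $m_j[\mu_{k,n}] - m_j[\mathfrak{g}[\mu_{k,n}]]$ for $j = 1, \ldots, q$, all of which vanish by hypothesis. The leftover remainder is an order-$(q+1)$ expression that can be bounded, using the $\mathrm{op}$-norm regularity of $\partial^{q+1} g_f$ (uniform in $f$ on the common spectrum of the $S_n^{(k)}$, which is bounded thanks to the support hypothesis), by a product of the form $C\, m_{q+1}[\mu_{k,n}]$. Summing over $k$ yields \eqref{ineq:Berryesseenclassical12}. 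The homogeneous consequence \eqref{ineq:Berryesseenclassical22} is then obtained by observing that the variance normalization and the uniform support bound force $\sup_k \Norm{X_k}_{\mathrm{op}} = O(n^{-1/2})$, so that $m_{q+1}[\mu_{k,n}] \leq \Norm{X_k}_{\mathrm{op}}^{q-1} \sigma_{k,n}^2$ and $\sum_k m_{q+1}[\mu_{k,n}] = O(n^{-(q-1)/2})$.

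The principal obstacle lies in the Taylor expansion step. In the commutative setting such an expansion is routine, but here the iterated difference quotient $\partial^{q+1} g_f$ lives in $\Ac^{\otimes (q+2)}$, so a careful bookkeeping is required to (i) show that only scalar moments of $X_k$ appear at each order, (ii) confirm that the first $q$ of them cancel exactly with the corresponding semicircular moments $m_j[\mathfrak{g}[\mu_{k,n}]]$ obtained from the Stein operator, and (iii) control the remainder in a way that factors through the operator-norm bound on $\partial^{q+1} g_f$ without picking up factors that blow up in $q$ or $n$. Once this is carried out, the rest of the argument is a direct summation and the two conclusions of the corollary follow.
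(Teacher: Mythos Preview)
Your outline diverges from the paper's proof in two structural ways, and one of them creates a genuine gap.

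First, the paper does \emph{not} run Stein's method on the full sum $S_n$ and then localize via a Lindeberg decomposition. Instead it begins with the subadditivity of $d_W$ under free convolution (Lemma~\ref{Lemma:convolutioninequality}), writing $\mathbf{s}=\boxplus_{k}\mathfrak{g}[\mu_{k,n}]$ and reducing immediately to the one-summand problem $d_W(\mu_{k,n},\mathfrak{g}[\mu_{k,n}])$. Only after this reduction is Stein's method applied, and to a single measure $\mu_{k,n}$ versus its matching semicircle $\mathfrak{g}[\mu_{k,n}]$. This is where the moment-matching rank $q$ enters: via the Cauchy integral formula the test function is replaced by the explicit resolvent $g(x)=(z-x)^{-2}$, an algebraic Taylor expansion (Lemma~\ref{lemma:atechone}) is performed, and the terms of order $\leq q$ are shown to be $\theta$-independent using Proposition~\ref{prop:krewerasprop} and Lemma~\ref{lem:momentmatchingPUrel}. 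They then cancel against the $\theta=\infty$ evaluation, which vanishes by Stein's lemma. Your mechanism for the cancellation (``coefficients reassemble into $m_j[\mu_{k,n}]-m_j[\mathfrak{g}[\mu_{k,n}]]$'') is plausible in spirit but is not how the paper argues.

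Second, and this is the gap: you invoke ``regularity bounds on $g_f$ and its iterated free difference quotients $\partial^j g_f$ established in the main body.'' No such bounds are established. The paper explicitly abandons the primal Stein equation $\mathcal{L}_{\boxplus}[Df]=h-\langle\mathbf{s},h\rangle$ precisely because existence and regularity of its solution are unclear (see the discussion around \eqref{NCsteinone}), and works instead with the \emph{dual} formulation $\langle \mathcal{S}_{\boxplus}^{*}[\mu],\mathcal{L}_{\boxplus}[Dh]\rangle$ of Proposition~\ref{prop:main}, where $\mathcal{S}_{\boxplus}^{*}$ acts on measures. Your argument therefore rests on an ingredient the paper neither supplies nor claims, and which it in fact flags as the main technical obstruction to the approach you are sketching.
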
  

\noindent While not as general as the formulation presented in Section 5, this result still provides a solid description of the behavior of large convolutions of small measures. It goes beyond what is  currently available in the literature on limit theorems for non-commutative random variables, though it is important to note that several related works have laid the groundwork for this  (see \cite{MR2370598}, \cite{MR3101842}, \cite{MR2370598}, \cite{MR4607696}).

\begin{remark}
Relation \eqref{ineq:Berryesseenclassical12} provides a quantization of the idea that a large superposition of measures should become closer to the semicircular distribution if the $\mu_{n,k}$'s become ``nearer to semicircular''. This notion of ``nearness'' is codified by number $q$ of moments that match the moments of the semicircular distribution.       	
\end{remark}
The organization of the paper emphasizes the parallelism of tensorial and free argumentations, as this practice will shed light on what type of results one should expect to hold, just by looking at the, already very well developed, theory of tensorial limit theorems. The precise structure of the manuscript is as follows. We begin with a brief overview of the development of the theory of Stein's method, deferring the presentation of our applications to later sections. First, we address Gaussian approximations in the tensorial regime, focusing on the application-based historical progression without emphasizing sharpness or generality (Section \ref{subsec:freeclt}). We then turn to the free case (Section 1.2), and finally, revisit the free case but now with the incorporation of Stein’s method (Section \ref{subsec:tensCLTviastein}). In Section \ref{sec:prelim} we present the preliminaries, among which we include a discussion of the basic ideas from Stein's method, as well as a brief introduction to our main ideas. In Section \ref{sec:steinmethodlologyNC}, we develop our version of non-commutative Stein method in full detail. Our main technical result is presented in Section \ref{freeberryessensec}, which is followed by the corresponding proofs, described in   Section \ref{sec:proofofmainresults}. Section \ref{sec:technicallemas} is utilized for proving some technical lemmas that are used throughout the paper.\\

\section{Literature Review}
In this section, we present a brief summary of the main developments regarding tensorial central limit theorems.
\subsection{Measurements of accuracy in the CLT}\label{subsec:MeasaccCLT}
Let $\mathcal{P}(\R)$ be the set of probability measures over the real line and $\dilation_{r}:\mathcal{P}(\R)\rightarrow \mathcal{P}(\R)$, the dilation operator by the non-zero real factor $r$, defined through $\dilation_{r}[\mu][A]:=\mu[A/r]$. The celebrated Lindeberg central limit theorem (see \cite{MR1544569} and \cite[Page 307]{MR1810041}) establishes minimal conditions under which $\dilation_{1/s_{n}}[\nu_n]$, with  
$$s_{n}^2:=\sum_{k=1}^n\int_{\R}x^2\mu_{k,n}(dx),$$ converges weakly towards the standard Gaussian measure 
$$\gamma(dx) \coloneqq \frac{1}{\sqrt{2\pi}}e^{-\frac{1}{2}x^2}.$$ Implementations of the Gaussian approximation for $\dilation_{1/s_{n}}[\nu_n]$ often require accurate assessments of the associated error, measured according to a suitable probability distance. Andrew Berry and Carl-Gustav Esseen addressed this topic in \cite{MR0011909} and \cite{MR3498}, and proved the existence of a constant $C>0$, independent of $n$, such that 
\begin{equation}
\label{eq:BEclassic}
    d_{\textnormal{Kol}}(\dilation_{1/s_{n}}[\nu_n],\gamma) \leq C s_{n}^{-3}\sum_{k=1}^n\int_{\R}|x|^{3}\mu_{k,n}(dx),
\end{equation}
where  $d_{\textnormal{Kol}}$ is the Kolmogorov distance 
\begin{equation*}
    d_{\textnormal{Kol}}(\mu,\nu) = \sup_{z\in\mathbb{R}} \lvert \mu(-\infty,z] - \nu(-\infty,z] \rvert.
\end{equation*}
After the publication of this result, many improvements and developments were made, among which we remark the influential manuscript \cite{ChStein} published by Charles Stein in 1972, which constitutes the foundations of the perspective taken in this paper. This work marked the beginning of the so-called  \emph{Stein's method},  which nowadays refers to a collection of techniques that allow bounding probability distances by means of functional operators. The core of the theory stems from the fact that the left-hand side of \eqref{eq:BEclassic} is obtained as the supremum over functions of the form $h_z(x)=\Indi{\{x\leq z\}}$, with $z$ ranging over the real line, of the expression
\begin{align}\label{eqsteinintuition}
\int_{\R}h_z(x)(\dilation_{1/s_{n}}[\nu_{n}]-\gamma)(dx).
\end{align}
Stein's method regards  \eqref{eqsteinintuition} as an expression of the form
\begin{align}\label{eq:SteinIBPclassic}
    \int_{\R} (xDf_z(x)-D^2f_z(x))\dilation_{1/s_{n}}[\nu_n](dx),
\end{align}
where $D$ denotes the derivative operator and $f_z$ is a function depending on $z$, and satisfying adequate smoothness properties. The term \eqref{eq:SteinIBPclassic} has the advantage of not involving the substraction by the measure $\gamma$, appearing in \eqref{eqsteinintuition}, as this operation becomes codified in the definition of $f_z$. This perspective, combined with the classical Lindeberg argument, yields a simple proceedure for recovering the results by  Berry and Esseen. Further details about this technique will be outlined in Section \ref{Section:ClassicalSteinMethod}. The interested reader can consult \cite{ChenGoldShao} for a thorough discussion of the topic from a classical point of view and \cite{NoPe} for a perspective oriented to its application to Gaussian functionals. A remarkable advantage of Stein's method is its versatility for being applicable to  sums of non-necessarily independent random variables. Indeed, the combination of Stein's method with tools such as exchangeable pairs or Malliavin calculus has had great success in the theory of Gaussian approximations (see \cite{ChenGoldShao}, \cite{NoPe}). One of the most straightforward applications of the above ideas is the Gaussian approximation for sums of weakly dependent variables, with dependency encoded by ``dependency graphs'', a type of probabilistic structure which applies in a wide variety of models, including time series and random geometric graphs (see \cite{MR1986198}).\\

\noindent We would like to draw the reader’s attention to the manuscripts \cite{MR3230000} and \cite{MR3112923}, which discuss the quality of the entropic version of the CLT, as well as its counterpart in the total variation metric:
\begin{align*}
d_{TV}(\mu,\nu)
&:=\sup_{A\in\mathcal{B}(\R)}|\mu[A]-\nu[A]|,
\end{align*}
under regularity assumptions on the corresponding summands. These works are among the few general results available in the literature concerning asymptotic Gaussianity for normalized inhomogeneous convolutions measured in total variation.\footnote{Although the total variation metric is not typically within the reach of limit theorems for models of this kind, results concerning the Kolmogorov and Wasserstein metrics have been studied quite broadly.} It is worth noting that the main technical hypothesis in \cite{MR3230000} involves a uniform boundedness of the entropic distance to normality. As we will discuss in detail in the forthcoming Section \ref{freeberryessensec}, this condition can be replaced in the non-commutative free setting by a boundedness condition on the summands. We regard this as an instance of the aforementioned improvement in robustness when transitioning from the tensorial to the free setting.

\subsection{Measurements of accuracy in the free CLT}\label{subsec:freeclt}
The development of the above ideas in the framework of non-commutative random variables is a topic that has shown to be of great relevance, due to its applications in operator algebras, random matrices, combinatorics, and representation theory of symmetric groups, among others. Our main focus will be on the case of variables exhibiting weakly dependence in the free sense  (see Section \ref{sec:noncommelementsofnon}). The notion of freeness  induces a natural convolution obtained as the distribution of the sum of freely independent variables with prescribed marginals, which in turn gives sense to the free version of \eqref{eq:nundefintro}. The central limit theorem has a counterpart in the realm of free-convolution, with the semicircular law 
$$\mathbf{s}(dx):=\frac{2}{\pi}\sqrt{1-x^2}dx$$ 
playing the role of the Gaussian distribution. Many efforts have already been put on the investigation of these types of limit theorems. In particular, the free version of \eqref{eq:BEclassic} has been studied by Christyakov and G\"otze in \cite{MR2370598}, where they showed that for freely independent non-commutative standardized random variables $X_1,\dots,X_n$ with distribution $\mu$, satisfying mild conditions, the Kolmogorov distance between the law of $n^{-1/2}(X_1+\cdots+X_{n})$ and $\mathbf{s} $ is bounded by a constant multiple of $n^{-1/2}.$ This result relies on the interplay between free convolution and the reciprocal of the Cauchy transform for the probability measures under consideration, a technique that exploits the combinatorial relations encoded in the notion of freeness. Although quite different in nature, the use of the reciprocal of the Cauchy transform does draw a parallelism with the Fourier transform in the classical central limit theorem, since both concepts translate the nature of the convolution operation into complex functions satisfying adequate additivity properties.\\

\noindent Quantitative estimates of the free central limit theorem in Kolmogorov distance were studied in \cite{MR2370598}, and enhanced to an Edgeworth type expansion in \cite{MR3101842}. The paper \cite{MR2370598} finds Berry Esseen type bounds as well with the inhomogeneous case, but their result in this direction, when specialized to the homogeneous case, yield a quadratically smaller rate of convergence when compared with the free homogeneous Berry Esseen bound available in the same paper. Finally, we would like to mention the manuscript \cite{MR4607696}, which exhibits an interesting phenomenology without a parallel in classical convolution: the fact that a vanishing third moment condition improves the rate of convergence in the free CLT from the order $1/\sqrt{n}$ to order $ 1/n$.\\

\subsection{Inhomogeneous free Berry-Esseen theorem via Stein's method}\label{subsec:tensCLTviastein} Taking into consideration that the  development of classical limit theorems started from a direct computation approach, then drifted to a Fourier perspective and was subsequently strengthened by the introduction of Stein's method, it is quite natural to wonder if a similar methodology could be also conceived in the non-commutative framework in a spirit similar to the tensorial Stein's method \cite{ChStein}.  We will show that this is indeed the case, and that summands can be allowed to only satisfy weak dependency (in the free sense) in place of full freeness. As a non-negligible amount of notation is required for the formulation of our main result, we pospone its presentation to Section \ref{freeberryessensec}. A pivotal element within our proofs, is the super-convergence of the $\nu_n$'s, which Bercovici and Voiculescu showed to be valid for inhomogeneous convolutions of uniformly bounded probability measures (see \cite{MR1355057}). This property guarantees that under the condition of uniform boundedness of the support of the $\mu_{k,n}$, the support of $\nu_n$ is contained in the interval $[-3,3]$, for $n$ sufficiently large. The generality of these results allows us to by-pass some of the technical complications.\\

\noindent\textit{Related work}\\
We would like to remark that the idea of implementing Stein's method techniques in the context of non-commutative random variables has already had remarkable advances at the time this manuscript was written. Among them, we emphasize the work by G\"otze and  Tikhomirov \cite{MR2171668}, where a differential equation for characterizing the semicircular distribution is proposed, and a variety of applications in random matrices were addressed with this technique. In \cite{MR2978133}, a free Stein  methodology was adapted to the context of functionals of the free Brownian, with a perspective of non-commutative Malliavin calculus. This led to the formulation of the free fourth moment theorem, that has shown to be a formidable tool in the study of limit theorems for additive functionals (see \cite{MR3848230}, \cite{MR3548767}, \cite{MR3378462}, \cite{MR3217054}). The techniques from \cite{MR2978133} were refined by Cebron in \cite{MR4203499}, where a quantitative version of the fourth moment theorem in the Wasserstein distance was proved by a combination of semigroup arguments, free Malliavin calculus and free Stein discrepancy. Regardless of these advances, the topic is far from being complete, as its most natural application: a simple proof of the free central limit theorem with a perspective parallel to \cite{ChStein}, remains an open problem. This manuscript proposes a particular type of Stein's method, fundamentally different from \cite{MR2171668}, which allows for a simple implementation to free convolutions of probability measures, yielding a Berry-Esseen type bound of the type \eqref{ineq:Berryesseenclassical} for non-necessarily identically distributed and non-necessarily free, random variables.\\

\noindent We would like to comment on the similarities and differences of our work in comparison to the references mentioned above: the work \cite{MR4203499} is, in the authors' opinion, the closest one to this manuscript in spirit, due to the fact that it utilizes the free Ornstein Uhlenbeck semigroups as pivotal tool for interpolating the probability measures under consideration. Therein, the relation between the infinitesimal change in time of the semigroup and the non-commutative derivative, previously observed in  \cite{MR2978133}, is exploited to obtain quantitative assessments of the rate of convergence in the free fourth moment theorem. The paper \cite{MR2978133} has as well some of these ideas luring behind some of the arguments, although in a less explicit manner. Both of these pieces of work are of great influence to our work and they do have certain similarities with ours, especially at the level of technical computations. However, the nature of the problems addressed in \cite{MR4203499}  and \cite{MR2978133}, as well as the solutions themselves are entirely different from our main result: the overall theme discussed in \cite{MR4203499}  and \cite{MR2978133} was the study of functionals of the free Brownian motion via Malliavin calculus, while ours is the study of dilated sums of self-adjoint free random variables, via arguments inspired in the generator approach from the theory of classical Stein's method. In terms of the statement of our main  result, the closest manuscripts to our work are  \cite{MR2171668}, \cite{MR4607696}, \cite{MR3230000}, \cite{MR3112923} and \cite{MR4607696}, where the free Berry-Esseen theorem for free   random variables with improvement under vanishing third moments, and Edgeworth expansion estimates is proved using cumulant transforms. As previously mentioned, our main application aligns with the theme of these papers, but our perspectives deviate substantially: ours is mostly probabilistic, while the ones currently available are analytical, and there is no clear connection yet of how the identities from the theory of free Stein method can be captured by Cauchy, cumulant or $\mathcal{R}$-transforms.\\

\noindent Finally, we would like to mention two pieces of work which, although not directly related to the free Berry-Esseen theorem, possess common features with the manuscripts cited thus far: (i) the paper \cite{ArBanTseng} by Arizmendi et.al., where the Lindeberg method is successfully applied in the context of boolean and monotone convolutions. The classical Lindeberg method is known to have some common points in their argumentation in comparison with Stein's method; mainly the treatment of the effect of removing one of the independent components under consideration via Taylor expansions. The non-commutative Lindeberg method discussed in \cite{ArBanTseng} and further developed in our paper extends the classical parallelism into the framework of free convolution. Additionally, it is worth highlighting the contributions by Goldstein and Kemp \cite{GoldKemp}, as well as C\'ebron, Fathi, and Mai \cite{cebron2020note}, where the concept of the free bias transform is explored. Their work provides a new perspective on characterizing infinitely divisible distributions within free probability theory.\\

\section{Preliminaries}\label{sec:prelim}
\subsection{Elements of non-commutative probability}\label{sec:noncommelementsofnon}
In this section we recall some basic notions from free probability.\\ 

\noindent \textit{Non-Commutative Probability Spaces}\\
Let $\mathcal{A}$ be a unital $C^*$-algebra and $\tau:\mathcal{A}\to\mathbb{C}$ a positive unital linear functional. We then say that the pair $(\mathcal{A},\tau)$ is a $C^*$\textit{-probability space}. The elements of $\mathcal{A}$ will be called non-commutative random variables. An element $a\in\mathcal{A}$ will be called self-adjoint if it satisfies $a=a^*$. The functional $\tau$ serves as the non-commutative analogue of the expectation operator in classical probability. In this spirit, for a given collection $a_1,\dots,a_k$ of elements in $\mathcal{A},$ we will refer to the values of $\tau[a_{i_1}\cdots a_{i_n}]$, for $1\leq i_1,...,i_{n}\leq k$, $n\geq1$, as the \textit{mixed moments} of $a_1,\dots,a_k$.\\

\noindent The  characterization of a non-commutative random variable via the description of its moments (procedure that is purely algebraic) can be enhanced to the familiar notion of describing the distribution of a random variable by means of a probability measure, provided that the variable under consideration is self adjoint. More precisely, if $a\in\mathcal{A}$ is self-adjoint, then there exists a unique probability measure $\mu_a$, typically referred as its ``analytical distribution'' such that 
\begin{align*}
\int_{\R}x^{k}\mu_a (dx)
  &=\tau [a^{k}], \quad \text{ for } k\in \N.
\end{align*}

\noindent \textit{Free independence}\\
In general, knowledge of the individual distribution of two given self-adjoint elements $a,b\in \mathcal{A}$ says very little their joint distribution (mixed moments), unless some  additional structure on the relation between $a$ and $b$ is imposed. A possible venue for addressing this topic, consists of making use of the notion of free independence, which we describe next.

\begin{definition} Let $\{A_n\}_{n\geq 1}$ be a sequence of subalgebras of $\mathcal{A}$. For $a\in \mathcal{A}$, denote the centering of $a$ by $\bar{a}:=a-\tau(a)$. We say that $\{A_n\}_{n\geq 1}$ are freely independent, or free, if
\begin{equation}\label{eq:tauoveralphasfreeness}
\tau[\bar{a}_1\bar{a}_2 \cdots \bar{a}_k]=0,
\end{equation}
for every choice of $a_1,\dots, a_k\in \mathcal{A}$ such that  $a_i\in A_{j(i)}$, with $j(1),\dots, j(k)$ satisfying $j(i)\neq j(i+1)$.
\end{definition} 
\noindent The notion of free independence is particularly useful when applied to the algebras generated by two elements in $a,b\in\mathcal{A}$, as it allows us to recover the joint moments of $a,b$ in terms of their individual distributions of $a$ and $b$ respectively. In particular, one can check that when $a_0,a_1,a_2\in\langle a\rangle$ and $b_0,b_1,b_2\in\langle b\rangle$, then, provided that the algebras generated by $a,b$ (denoted by $\langle a\rangle$ and $\langle b\rangle$), are free, we have the identities 
\begin{align}
\tau[a_0b_0]&=\tau[a_0]\tau[b_0]\nonumber\\
\tau[a_1b_0a_2] &=\tau[a_1a_2]\tau[b_0]\nonumber\\
\tau[a_1b_1a_2b_2] &=\tau[a_1a_2]\tau[b_1]\tau[b_2]+\tau[a_1]\tau[a_2]\tau[b_1b_2]-\tau[a_1]\tau[b_1]\tau[a_2]\tau[b_2].\label{eq:firstmomentsfreeness}
\end{align}

\subsubsection{Combinatorics of mixed moments}
A pivotal part of our proof relies on the computational power of the so-called free cumulants, which we define next. Proposition \ref{prop:krewerasprop} bellow, which gives an explicit formula for the mixed moments of free variables. To adequately formulate this result, we first introduce some basic combinatorics tools. In the sequel, for $n\in\N$, we define $[n]:=\{1,\dots, n\}$. For a given totally ordered finite set $X$, we denote by $\mathfrak{P}(X)$ the set of partitions of $X$. We will simply write $\mathfrak{P}(n)$ when $X=[n]$.

\begin{definition}
Define the function $\tau_n:\mathcal{A}^n\rightarrow\mathbb{C}$ via
\begin{align*}
\tau_n[a_1,\dots, a_n]
  &:=\tau[a_1\cdots a_n]	.
\end{align*}
For a given $V\subset [n]$ of the form $V=\{i_1,\dots, i_r\}$, with $i_j<i_{j+1}$, we define 
\begin{align*}
\tau_V[a_1,\dots, a_n]
  &:=\tau_n[a_{i_1},\dots, a_{i_n}].
\end{align*}
For a partition $\pi\in \mathfrak{P}(n)$, we define 
\begin{align*}
\tau_{\pi}[a_1,\dots, a_n]
  &:=\prod_{V\in \pi}\tau_{V}[a_1,\dots, a_n].	
\end{align*}
\end{definition}
The set $\mathfrak{P}(n)$ is a poset with lattice structure defined through refinement. The associated order will be denoted by ``$\leq $''. The number of elements in a partition $\pi\in\mathfrak{P}(n)$ will be denoted by $|\pi|$. A given element $\pi\in\mathfrak{P}(n)$ is said to have a crossing if there are blocks $V_1,V_2\in\pi$, and different elements $a,b\in V_1$ and $c,d\in V_2$, such that $a<c<b<d$. The partition $\pi$ is said to be non-crossing if no blocks of this type can be found. The set of non-crossing partitions, ordered according to ``$\leq $'' becomes a lattice, which in the sequel will be denoted by $NC(n)$. Denote by $\hat{0}$ and $\hat{1}$ the minimal (respectively maximal) partitions $\hat{0}:=\{\{j\}\ ;\ j\in [n]\}$ and $\hat{1}:=\{[n]\}$. The M\"obius function $\mu$ associated to the lattice $NC(n)$ is known to be multiplicative and satisfy 
\begin{align*}
\mu(\hat{0},\hat{1})
  &:=\frac{(-1)^{n-1}}{n+1}\Comb{2n\\n}.	
\end{align*}
\begin{definition}
Consider additional ordered symbols $\bar{1},\dots, \bar{n}$ and interlace them with the elements of $[n]$ in the following alternating way:
$$1\bar{1}2\bar{2}\cdots n\bar{n}.$$	
Let $\pi$ be an element of $NC(n)$. Then its Kreweras complement $K[\pi]$ is the maximal element $\sigma\in NC(\{\bar{1},\dots, \bar{n}\})\cong NC(n)$ satisfying 
\begin{align*}
\pi\cup\sigma\in NC(\{1,\bar{1},2,\bar{2},\cdots, n,\bar{n}\}).
\end{align*}
\end{definition}

\noindent Next we introduce the notion of free cumulants
\begin{definition}
Let $a_1,\dots, a_n\in\mathcal{A}$ be given. The free cumulants $\{\kappa_{\pi}[a_1,\dots, a_n]\ ;\pi\in\mathcal{P}\}$	are defined as 
\begin{align*}
\kappa_{\pi}[a_1,\dots, a_n]
  &:=\sum_{\substack{\sigma\in NC(n)\\\sigma\leq \pi}}\tau_{\sigma}[a_1,\dots, a_{n}]\mu(\sigma,\pi).
\end{align*}
\end{definition}
The cumulants are known to be a pivotal element in the understanding of the distribution of free non-commutative random variables. We will not delve too much into the topic, as the main piece of information that we will require is Proposition \ref{prop:krewerasprop} bellow. We refer the reader to \cite{MR2266879}. The following result is the anticipated formula for the computation of mixed moments. Its proof can be found in \cite[Theorem 14.4.]{MR2266879}
\begin{proposition}\label{prop:krewerasprop}
	Consider random variables $a_1,\dots,a_n,b_1,\dots,b_n \in \mathcal{A}$  such that ${a_1,\dots,a_n}$ and ${b_1,\dots,b_n}$ are freely independent. Then we have
\begin{align*}
\tau[a_1b_1\cdots a_n b_n]
  &=\sum_{\pi\in NC(n)}\kappa_{\pi}[a_1,\dots, a_n]\tau_{K(\pi)}[b_1,\dots, b_b]	
\end{align*}
\end{proposition}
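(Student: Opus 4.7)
The strategy follows the classical cumulant-theoretic route. The first step is to apply the moment-cumulant formula to the interleaved $2n$-tuple $c_1=a_1,c_2=b_1,\ldots,c_{2n-1}=a_n,c_{2n}=b_n$. Möbius inversion of the defining relation of $\kappa_\pi$, evaluated at $\pi=\hat 1\in NC(2n)$, gives
\begin{align*}
\tau[a_1b_1\cdots a_nb_n]
  &=\sum_{\sigma\in NC(2n)}\kappa_\sigma[c_1,\ldots,c_{2n}].
\end{align*}
Multiplicativity of $\kappa_\sigma$ over the blocks of $\sigma$, inherited from the multiplicativity of $\mu$ on the lattice $NC$ as noted in the excerpt, will be used repeatedly.

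Next, I would invoke the cumulant characterization of freeness: the algebras generated by $\{a_1,\ldots,a_n\}$ and $\{b_1,\ldots,b_n\}$ are free if and only if every mixed free cumulant (one whose arguments come from both families) vanishes. Applying this to the above sum, only those $\sigma\in NC(2n)$ contribute whose blocks are \emph{homogeneous}, i.e., each block lies entirely among the odd positions (the $a$'s) or entirely among the even positions (the $b$'s). Any such $\sigma$ decomposes uniquely as $\sigma=\pi\sqcup\rho$, where $\pi\in NC(n)$ is obtained by relabeling the $a$-blocks and $\rho\in NC(n)$ comes from the $b$-blocks. By the multiplicativity noted above, $\kappa_\sigma=\kappa_\pi[a_1,\ldots,a_n]\,\kappa_\rho[b_1,\ldots,b_n]$.

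For each fixed $\pi\in NC(n)$, the admissible $\rho$ are precisely the non-crossing partitions of the even positions $\{\bar 1,\ldots,\bar n\}$ such that $\pi\cup\rho\in NC(2n)$ under the interleaved labelling $1\bar 1 2\bar 2\cdots n\bar n$. The very definition of the Kreweras complement supplied in the excerpt identifies $K(\pi)$ as the maximal such $\rho$, so the admissible $\rho$ are those with $\rho\leq K(\pi)$. Reorganizing the sum accordingly yields
\begin{align*}
\tau[a_1b_1\cdots a_nb_n]
  &=\sum_{\pi\in NC(n)}\kappa_\pi[a_1,\ldots,a_n]\sum_{\rho\leq K(\pi)}\kappa_\rho[b_1,\ldots,b_n].
\end{align*}
The inner sum is, by block-wise Möbius inversion of the cumulant definition applied on each block of $K(\pi)$, equal to $\tau_{K(\pi)}[b_1,\ldots,b_n]$, which is the claimed identity.

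The step I expect to be the main obstacle is the vanishing of mixed cumulants: although standard, it is not immediate from the definitions recalled in the excerpt, and its proof requires a Möbius-theoretic induction translating the defining freeness property (that alternating products of centered elements across the two algebras have vanishing trace) into the cumulant language. The purely combinatorial step identifying the admissible $\rho$'s with $\{\rho\in NC(n):\rho\leq K(\pi)\}$ is more elementary but still warrants a direct verification from the interleaved labelling.
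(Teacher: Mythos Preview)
The paper does not supply its own proof of this proposition; it simply quotes the result and refers the reader to \cite[Theorem~14.4]{MR2266879} (Nica--Speicher). Your outline is exactly the standard argument given there: expand $\tau[a_1b_1\cdots a_nb_n]$ via the moment--cumulant formula on $NC(2n)$, kill the mixed blocks using the vanishing of mixed free cumulants, split each surviving $\sigma$ as $\pi\sqcup\rho$, identify the admissible $\rho$ as those with $\rho\le K(\pi)$, and collapse the inner sum to $\tau_{K(\pi)}$ by M\"obius inversion. So your proposal is correct and coincides with the approach the paper defers to.
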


\subsubsection{Free convolution}
The notion of free convolution, conceived as the analytic distribution of the sum of two free random variables with prescribed probability law, was defined in \cite{MR799593} for probability measures with compact support and later extended in \cite{MR1165862} for the case of finite variance, and in \cite{MR1254116} for the general unbounded case. The general definition relies on properties of the Cauchy transform of $\mu \in \mathcal{M}$, but for purposes of the current paper, we will simply focus on its algebraic definition. 
\begin{Definition}
Given two probability measures $\mu$ and $\nu$, we construct a non-commutative probability space $(\mathcal{A},\tau)$ and self-adjoint elements $a,b\in\mathcal{A}$, with analytic distributions $\mu$ and $\nu$, respectively, such that $a$ and $b$ are free. In this setting, the analytic distribution of $a+b$ is called free additive convolution and is denoted by $\mu\boxplus\nu.$ 
\end{Definition}

\subsection{Non-commutative Kantorovich-Rubenstein distance}\label{sec:Kantrubdistance}
In this section, we introduce the notion of the non-commutative Kantorovich-Rubenstein distance, first presented by Biane and Voiculescu in \cite{MR1878316}. To this end, let $\mathfrak{J}$ denote the set of states over $\mathcal{A}^{2}$. We refer to the first and second components in $\mathcal{A}^{2}$ as $X$ and $Y$, respectively. For a given pair of probability measures $\gamma_1, \gamma_2 \in \mathcal{P}(\R)$, we define $\Pi[\gamma_1, \gamma_2]$ as the set of elements in $\mathfrak{J}$ whose marginals over the first and second variables correspond to $\gamma_1$ and $\gamma_2$, respectively.

\begin{definition}
The $1$-Kantorovich-Rubenstein metric is defined by 
\begin{align*}
d_{W}(\gamma_1, \gamma_2)
  &:= \inf_{\tau \in \Pi[\gamma_1, \gamma_2]} \tau[|Y - X|].
\end{align*}
\end{definition}

This definition is inspired by its counterpart in optimal transport, given by
\begin{align*}
\mathbbm{d}_{W}(\gamma_1, \gamma_2)
  &= \inf_{\pi \in \overline{\Pi}[\gamma_1, \gamma_2]} \int_{\R^{2}} |x - y|^{p} \pi(dx, dy),
\end{align*}
where $\overline{\Pi}[\gamma_1, \gamma_2]$ denotes the set of tensor transport plans in $\R^{2}$. These are probability measures over $\R^{2}$ whose marginals over the first and second components are $\gamma_1$ and $\gamma_2$, respectively. Since classical probability spaces are particular instances of non-commutative ones, the following inequality holds:
\begin{align*}
d_{W}(\gamma_1, \gamma_2)
  &\leq \mathbbm{d}_{W}(\gamma_1, \gamma_2).
\end{align*}

The following result can be verified directly from the definition of $d_{W}$. For a detailed proof, we refer the reader to \cite{JaVa}.

\begin{Lemma}\label{Lemma:convolutioninequality}
If $\gamma_1, \rho_1, \dots, \gamma_n, \rho_n \in \mathcal{P}(\R)$ are probability measures with finite first moments, then 
\begin{align}\label{eq:keyineq1}
d_{W}(\gamma_1 \boxplus \cdots \boxplus \gamma_n, \rho_1 \boxplus \cdots \boxplus \rho_n)
	&\leq \sum_{k=1}^{n} d_{W}(\gamma_k, \rho_k).
\end{align}
\end{Lemma}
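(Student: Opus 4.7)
The plan is to construct a joint coupling on a single non-commutative probability space that simultaneously realises near-optimal couplings for each of the pairs $(\gamma_k,\rho_k)$, and then use the non-commutative triangle inequality for the trace $L^{1}$-norm, together with the fact that in a free product the sum of the first coordinates has distribution $\gamma_1\boxplus\cdots\boxplus\gamma_n$ and the sum of the second coordinates has distribution $\rho_1\boxplus\cdots\boxplus\rho_n$.

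More precisely, fix $\varepsilon>0$. For each $k\in\{1,\dots,n\}$, the definition of $d_{W}(\gamma_k,\rho_k)$ furnishes a $C^*$-probability space $(\mathcal{A}_k,\tau_k)$ together with self-adjoint elements $X_k,Y_k\in\mathcal{A}_k$ having analytic distributions $\gamma_k$ and $\rho_k$, respectively, and satisfying
\begin{equation*}
\tau_k[\,|Y_k-X_k|\,]\;<\;d_{W}(\gamma_k,\rho_k)+\varepsilon.
\end{equation*}
Next I would form the reduced free product $(\mathcal{A},\tau):=(\mathcal{A}_1,\tau_1)*\cdots*(\mathcal{A}_n,\tau_n)$, identifying each $\mathcal{A}_k$ with its canonical image inside $\mathcal{A}$. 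By construction, the subalgebras $\mathcal{A}_1,\dots,\mathcal{A}_n$ are freely independent in $(\mathcal{A},\tau)$, while the restriction of $\tau$ to each $\mathcal{A}_k$ agrees with $\tau_k$, so the analytic distributions of the individual $X_k$ and $Y_k$ are preserved. In particular, the self-adjoint elements
\begin{equation*}
X:=X_1+\cdots+X_n,\qquad Y:=Y_1+\cdots+Y_n
\end{equation*}
have analytic distributions $\gamma_1\boxplus\cdots\boxplus\gamma_n$ and $\rho_1\boxplus\cdots\boxplus\rho_n$, respectively, by definition of the free convolution.

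The pair $(X,Y)$ viewed inside $(\mathcal{A},\tau)$ determines a joint state belonging to $\Pi[\gamma_1\boxplus\cdots\boxplus\gamma_n,\,\rho_1\boxplus\cdots\boxplus\rho_n]$. Hence, taking infima in the definition of $d_W$, one obtains
\begin{equation*}
d_{W}(\gamma_1\boxplus\cdots\boxplus\gamma_n,\rho_1\boxplus\cdots\boxplus\rho_n)
\;\leq\;\tau\bigl[\,|Y-X|\,\bigr]
\;=\;\tau\Bigl[\,\Bigl|\textstyle\sum_{k=1}^n(Y_k-X_k)\Bigr|\,\Bigr].
\end{equation*}
The main (and really the only) technical point is to invoke the non-commutative triangle inequality for the trace $L^{1}$-norm, namely $\tau[|A+B|]\leq\tau[|A|]+\tau[|B|]$ for self-adjoint $A,B\in\mathcal{A}$, which is the statement that $\|\cdot\|_{L^1(\tau)}$ is a norm. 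Iterating this bound yields
\begin{equation*}
\tau\Bigl[\,\Bigl|\textstyle\sum_{k=1}^n(Y_k-X_k)\Bigr|\,\Bigr]
\;\leq\;\sum_{k=1}^{n}\tau[\,|Y_k-X_k|\,]
\;=\;\sum_{k=1}^{n}\tau_k[\,|Y_k-X_k|\,]
\;<\;\sum_{k=1}^{n}d_{W}(\gamma_k,\rho_k)+n\varepsilon.
\end{equation*}
Letting $\varepsilon\downarrow 0$ produces the claimed inequality \eqref{eq:keyineq1}. The only delicate step in this blueprint is the free-product construction together with its compatibility with the marginal analytic distributions; all other ingredients are routine once couplings are assembled in a common space.
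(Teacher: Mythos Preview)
Your argument is correct and is exactly the ``direct verification from the definition of $d_W$'' that the paper alludes to without spelling out (it defers the details to \cite{JaVa}): take near-optimal couplings for each pair, embed them into a reduced free product so that the coordinate sums realize the free convolutions, and conclude via the triangle inequality for the trace $L^1$-norm. There is nothing to add.
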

\subsection{Free central limit theorems}
The free convolution naturally raises the question of whether a normalized large sum of freely independent random variables posseses a non-trivial limiting distribution, as in the classical case. The following theorem gives an answer to this question
\begin{Theorem}[Free central limit theorem]\label{eq:freclt}
Let $\{a_{k}\}_{k\geq 1}$ be a collection of self-adjoint free random variables defined on  $(\mathcal{A},\tau)$, with $a_{k}$ having analytic distribution $\mu_{k}$. If the $a_{k}$ are identically distributed and standardized, with finite moments of order $3$, then the analytic distribution of 
\begin{align}\label{eq:normalizedSn}
    \frac{1}{\sqrt{n}}\sum_{k=1}^{n}a_{k},
\end{align}
converges weakly towards the semicircle distribution $\mathbf{s}$.
\end{Theorem}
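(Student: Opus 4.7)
The plan is to execute the classical moment-method proof of the free CLT, leveraging the combinatorial machinery of free cumulants and non-crossing partitions introduced in Section \ref{sec:noncommelementsofnon}. Since the semicircle distribution $\mathbf{s}$ is compactly supported, it is uniquely determined by its moments; hence it suffices to show that for each fixed $m\ge 1$ the $m$-th moment of $S_n/\sqrt{n}$ converges to the $m$-th moment of $\mathbf{s}$, where $S_n:=a_1+\cdots+a_n$.

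First I would expand
$$\tau\!\left[\left(\tfrac{1}{\sqrt{n}}S_n\right)^{\!m}\right] \;=\; n^{-m/2}\sum_{i_1,\dots,i_m=1}^{n}\tau[a_{i_1}\cdots a_{i_m}]$$
and invoke the moment-cumulant inversion $\tau[a_{i_1}\cdots a_{i_m}]=\sum_{\pi\in NC(m)}\kappa_\pi[a_{i_1},\dots,a_{i_m}]$. The decisive consequence of freeness (equivalent to \eqref{eq:tauoveralphasfreeness} via Proposition \ref{prop:krewerasprop}) is the vanishing of mixed free cumulants: for each block $V$, $\kappa_V[a_{i_1},\dots,a_{i_m}]=0$ unless the indices $\{i_j:j\in V\}$ are all equal. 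Hence, for each $\pi\in NC(m)$, the constrained sum over $(i_1,\dots,i_m)$ has exactly $n^{|\pi|}$ admissible tuples, and each contributes $\prod_{V\in\pi}\kappa_{|V|}(\mu)$, producing
$$\tau\!\left[\left(\tfrac{1}{\sqrt{n}}S_n\right)^{\!m}\right] \;=\; \sum_{\pi\in NC(m)} n^{|\pi|-m/2}\prod_{V\in\pi}\kappa_{|V|}(\mu).$$

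The standardization hypothesis translates to $\kappa_1(\mu)=0$ and $\kappa_2(\mu)=1$, so every $\pi$ with a singleton block is killed. On the surviving partitions all blocks have size at least $2$, whence $|\pi|\le m/2$, with equality if and only if $m$ is even and $\pi$ is a non-crossing pair partition. Each such $\pi$ contributes precisely $\kappa_2(\mu)^{m/2}=1$, and the non-crossing pair partitions of $[m]$ are enumerated by the Catalan number $C_{m/2}$, which is exactly the $m$-th moment of $\mathbf{s}$; for odd $m$ the sum is empty. Every remaining term carries an overall factor $n^{|\pi|-m/2}$ with $|\pi|-m/2<0$, so it vanishes as $n\to\infty$.

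The main obstacle is controlling these subleading partitions: they involve cumulants $\kappa_k(\mu)$ of arbitrary order $k\le m$, whereas the hypothesis only grants finite moments of order three. This is resolved by observing that $a_1$ is a self-adjoint element of a unital $C^*$-algebra and therefore bounded, so $\mu$ is compactly supported and all its free cumulants are finite. For each fixed $m$, the finitely many $\kappa_{|V|}(\mu)$ and the finitely many partitions $\pi\in NC(m)$ yield an error of order $O(n^{-1})$. The method of moments, combined with the moment-determinacy of $\mathbf{s}$, then upgrades moment convergence to weak convergence and concludes the proof.
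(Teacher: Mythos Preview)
Your argument is the standard free-cumulant proof and it is correct. Note, however, that the paper does \emph{not} supply its own proof of Theorem~\ref{eq:freclt}: the statement appears in the preliminaries as a known result, immediately followed by the quoted Chistyakov--G\"otze Berry--Esseen bound, and serves only as background for the quantitative results in Section~\ref{freeberryessensec}. So there is nothing to compare against; your proposal fills in a gap that the authors deliberately left to the literature.

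Two minor remarks on the write-up. First, the vanishing of mixed free cumulants that you invoke is not literally the content of Proposition~\ref{prop:krewerasprop} (which computes alternating moments via the Kreweras complement); it is rather Speicher's characterization of freeness, which is standard but not stated in this paper. You may want to cite it directly (e.g.\ \cite[Theorem 11.16]{MR2266879}) rather than via \eqref{eq:tauoveralphasfreeness}. Second, your observation that self-adjoint elements of a $C^*$-algebra are automatically bounded---so that all moments and cumulants of $\mu$ are finite regardless of the third-moment hypothesis---is exactly right and worth making explicit, since without it the moment method would not close for $m\ge 4$.
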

The free Berry-Esseen theorem below, as presented in \cite[Proposition 2.6]{MR2370598}, describes the rate of convergence of the theorem above
\begin{theorem}[Chistyakov, G\"otze]
Let the notation of Theorem \ref{eq:freclt} prevail. Let $\mu$ denote the common distribution of the $a_k$ and $\nu_{n}$  the analytic distribution of \eqref{eq:normalizedSn}. Then we have that 
\begin{align*}
d_{K}(\nu_n,\mathbf{s})
  &\leq \frac{C}{\sqrt{n}}\left(\int_{\R}|x|^3\mu(dx)+\left(\int_{\R}|x|^4\mu(dx)\right)^{1/2}\right),
 \end{align*}
for some universal constant $C>0$.
\end{theorem}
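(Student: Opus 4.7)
The plan is to follow the analytic program initiated by Voiculescu and carried out by Chistyakov and G\"otze, which linearizes the free convolution via the R-transform and then extracts a Kolmogorov bound through a Bai-type smoothing inequality. Let $G_\rho(z)=\int_\R (z-x)^{-1}\rho(dx)$ denote the Cauchy transform, $F_\rho = 1/G_\rho$ its reciprocal, and $R_\rho(z)=F_\rho^{-1}(z)-1/z$ the R-transform of a probability measure $\rho$, each defined on an appropriate subset of the upper half plane or of a punctured neighborhood of the origin. The defining property $R_{\mu\boxplus\nu}=R_\mu+R_\nu$ together with the dilation rule give the scaling identity
\begin{align*}
R_{\nu_n}(z) \;=\; \sqrt{n}\, R_\mu(z/\sqrt{n}),
\end{align*}
while the standardization of $\mu$ yields the local expansion $R_\mu(z)=z+\kappa_3 z^2+\varepsilon(z)$, with $|\kappa_3|\leq C\, m_3[\mu]$ and a remainder $\varepsilon(z)=O(|z|^3)$ whose size is controlled in terms of $m_4[\mu]$ via the M\"obius-inversion formula defining the free cumulants.

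The second step is to transfer this estimate to the Cauchy transforms. Comparing with $R_\mathbf{s}(z)=z$ yields a uniform bound
\begin{align*}
|R_{\nu_n}(z)-R_\mathbf{s}(z)| \;\leq\; \frac{C\, m_3[\mu]}{\sqrt{n}}\,|z|^2 \,+\, \frac{C\, m_4[\mu]}{n}\,|z|^3
\end{align*}
on a suitable punctured neighborhood of the origin. Since $F_\rho^{-1}(z)=R_\rho(z)+1/z$, this is really an estimate on $F_{\nu_n}^{-1}-F_\mathbf{s}^{-1}$, which one then inverts into a pointwise bound on $|F_{\nu_n}(z)-F_\mathbf{s}(z)|$, and hence on $|G_{\nu_n}(z)-G_\mathbf{s}(z)|$, along a horizontal line $\{\mathrm{Im}\,z=y_0\}$ in the upper half plane. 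The inversion exploits the fact that $F_\mathbf{s}$ is biholomorphic on $\mathbb{C}\setminus[-1,1]$ with derivative bounded below away from $\pm 1$, via a quantitative implicit-function (or Rouch\'e) argument.

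The final step is a smoothing inequality of Bai's type: the Kolmogorov distance between two probability measures on $\R$ is controlled by $\int_\R |G_{\nu_n}(x+iy_0)-G_\mathbf{s}(x+iy_0)|\,dx$, up to a boundary correction that is tame because the density of $\mathbf{s}$ is bounded. Optimizing the height $y_0$ against the integrated error then produces the $n^{-1/2}$ rate and the announced combination of moments. The main obstacle is the transfer from the R-transform back to the Cauchy transform: the branch-point singularities of $F_\mathbf{s}$ at $\pm 1$ force one to work in a region whose distance to the real axis cannot be pushed below a threshold depending on $n$, and the delicate balance between this threshold and the R-transform error is precisely what separates the two contributions $m_3/\sqrt{n}$ and $m_4^{1/2}/\sqrt{n}$ in the final bound, the square-root appearing on $m_4$ as a Cauchy--Schwarz-type artifact of the cubic term being integrated against the smoothing kernel.
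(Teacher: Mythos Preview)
The paper does not prove this theorem at all: it is quoted as a background result from Chistyakov and G\"otze (the reference \cite{MR2370598}) and no argument is supplied. So there is no ``paper's own proof'' to compare against; your sketch is being measured against the original source rather than anything in this manuscript.

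That said, your outline is indeed the skeleton of the Chistyakov--G\"otze argument: linearize via the additivity of the $R$-transform, control the local expansion by the third and fourth moments, push this back to a bound on $G_{\nu_n}-G_{\mathbf{s}}$ near the real line, and finish with a Bai-type smoothing inequality for $d_K$. Two caveats are worth flagging. First, your definition $R_\rho(z)=F_\rho^{-1}(z)-1/z$ mixes conventions: the Voiculescu transform is $\phi_\rho(z)=F_\rho^{-1}(z)-z$, while the $R$-transform is $R_\rho(z)=G_\rho^{-1}(z)-1/z$; either works, but the scaling identity and the expansion $R_\mu(z)=z+\kappa_3 z^2+\cdots$ you wrote are correct only for the latter. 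Second, the genuinely hard part of the original proof is exactly the step you label ``quantitative implicit-function (or Rouch\'e) argument'': the domains on which $F_{\nu_n}^{-1}$ and $F_{\mathbf{s}}^{-1}$ are simultaneously defined and comparable shrink with $n$, and tracking this carefully against the smoothing height $y_0$ is where most of the analytic work lives. Your sketch acknowledges this obstacle but does not resolve it, so as written it is a correct plan rather than a proof.
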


\subsection{Classical Stein's Method Revisited}
\label{Section:ClassicalSteinMethod}
The purpose of this section is to dissect the fundamental pieces of Stein's method, in order to establish an adequate generalization that could serve in the framework of non-commutative probability.\\

\noindent \textit{The Stein Heuristic}\\
In what follows, for every probability measure $\mu$ defined in $\R$ and every function $\psi:\R\rightarrow\R$ integrable with respect to $\mu$, we denote the action of $\mu$ over $\psi$ by $\langle\mu,\psi\rangle$,  namely, 
\begin{align*}
\langle\mu,\psi\rangle
  &:=\int_{\R}\psi(x)\mu(dx).
\end{align*}
If $\mathcal{K}$ is a rich enough collection of measurable functions, whose elements are integrable with respect to $\mu$ and $\gamma$ denotes the standard Gaussian distribution, then the identity $\mu=\gamma$ is equivalent to 
\begin{align}\label{eq:idenmugammanai}
\langle\mu,\psi\rangle-\langle\gamma,\psi\rangle=0,
\end{align}
for all $\psi\in\mathcal{K}.$ In most applications, a direct analysis of $\langle\mu,\psi\rangle-\langle\gamma,\psi\rangle$ is difficult to carry, making it quite attractive to find characterizations of the Gaussian distribution that could serve as alternative to \eqref{eq:idenmugammanai}. The following lemma provides a very powerful equivalence of this sort. In the sequel,  $\iota:\R\rightarrow\R$ will denote the identity function $\iota(x)=x$ and  $\mathcal{C}^{\ell}(\R^{l};\R^d)$  the set of $\R^d$ valued, $\ell$-times continuously differentiable functions defined in $\mathbb{R}^{l}$.

\begin{lemma}[Stein's lemma]
Suppose that $\mu$ is a probability measure over $\R$ such that for every  $f\in \mathcal{C}^{2}(\R;\R)$ with derivatives integrable with respect to $\mu$, 
\begin{align}\label{eq:Steincharact}
\langle \mu, \iota\cdot Df-D^2f\rangle=0.
\end{align}
Then $\mu$ is equal to the standard Gaussian distribution $\gamma$.
\end{lemma}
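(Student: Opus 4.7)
The plan is to recognize the operator $\Lc f := D^2 f - \iota \cdot Df$ as the negative of the quantity appearing in \eqref{eq:Steincharact}, and as the infinitesimal generator of the Ornstein--Uhlenbeck semigroup whose unique invariant measure is $\gamma$. The hypothesis therefore asserts that $\langle \mu, \Lc f\rangle = 0$ whenever $f \in \mathcal{C}^{2}(\R;\R)$ has $\mu$-integrable first and second derivatives. The strategy is to invert $\Lc$ on a measure-determining class of right-hand sides: for each $h$ in such a class, produce $f \in \mathcal{C}^{2}$ solving $\Lc f = h - \langle \gamma, h\rangle$ whose relevant derivatives are bounded (hence $\mu$-integrable), and then read the resulting instance of the hypothesis as $\langle \mu, h\rangle = \langle \gamma, h\rangle$.

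Concretely, I would fix an arbitrary $h \in \mathcal{C}_{c}^{\infty}(\R)$ and solve the Stein equation $f''(x) - x f'(x) = h(x) - \langle \gamma, h\rangle$ using $e^{-x^{2}/2}$ as an integrating factor, obtaining
\begin{equation*}
    f'(x) \;=\; e^{x^{2}/2}\int_{-\infty}^{x}\bigl(h(y) - \langle \gamma, h\rangle\bigr) e^{-y^{2}/2}\, dy \;=\; - e^{x^{2}/2}\int_{x}^{\infty}\bigl(h(y) - \langle \gamma, h\rangle\bigr) e^{-y^{2}/2}\, dy,
\end{equation*}
where the second equality uses that the bracketed integrand has zero Gaussian expectation. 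Choosing $R>0$ with $\mathrm{supp}(h)\subset[-R,R]$ and inspecting both representations shows that $f'(x) = 0$ for $\Abs{x} > R$, so $f'$ is continuous, bounded, and compactly supported. Recovering $f$ by a further integration and using the ODE $f'' = h + \iota \cdot f'$, one obtains that $f$, $Df$, and $D^{2}f$ are all bounded, and $f \in \mathcal{C}^{2}(\R;\R)$.

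Since $Df$, $D^{2}f$, and $\iota\cdot Df$ are bounded measurable functions, they are integrable with respect to every probability measure. Applying the hypothesis to this $f$ yields
\begin{equation*}
    0 \;=\; \langle \mu, \iota\cdot Df - D^{2}f\rangle \;=\; -\langle \mu, \Lc f\rangle \;=\; -\langle \mu, h - \langle \gamma, h\rangle \rangle \;=\; \langle \gamma, h\rangle - \langle \mu, h\rangle.
\end{equation*}
Thus $\langle \mu, h\rangle = \langle \gamma, h\rangle$ for every $h \in \mathcal{C}_{c}^{\infty}(\R)$, a class that determines any finite Borel measure on $\R$; therefore $\mu = \gamma$.

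The main subtlety --- and the reason for restricting to $\mathcal{C}_{c}^{\infty}$ test functions rather than to indicators $\Indi{(-\infty,z]}$ or characters $e^{\ib\xi x}$ --- is verifying the integrability condition required to invoke the hypothesis. For indicators the solution $f$ fails to be $\mathcal{C}^{2}$, and for $e^{\ib\xi x}$ the candidate solution grows at infinity so that $\iota\cdot Df$ becomes $\mu$-integrable only under a first-moment hypothesis on $\mu$, which is not assumed. Passing through compactly supported smooth $h$ cleanly bypasses both obstructions while preserving a measure-determining family, and this is the step where all the technical weight of the argument sits.
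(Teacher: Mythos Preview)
The paper does not prove this classical lemma; it is quoted as known background in Section~\ref{Section:ClassicalSteinMethod} and the surrounding discussion (the semigroup construction of $\mathcal{S}[h]$) is presented as motivation for the free analogue rather than as a proof. So there is no ``paper's own proof'' to compare against here.

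Your overall strategy --- solve the Stein equation for $h\in\mathcal{C}_c^\infty(\R)$, check the solution has bounded derivatives, apply the hypothesis, and conclude by density --- is the standard one and would work. But one concrete step is wrong: the claim that $f'(x)=0$ for $|x|>R$ is false whenever $\langle\gamma,h\rangle\neq 0$. For $x>R$ your second representation gives
\[
f'(x)\;=\;-e^{x^2/2}\int_x^{\infty}\bigl(0-\langle\gamma,h\rangle\bigr)e^{-y^2/2}\,dy
\;=\;\langle\gamma,h\rangle\, e^{x^2/2}\int_x^{\infty}e^{-y^2/2}\,dy,
\]
which is nonzero; by the Mills-ratio asymptotic it behaves like $\langle\gamma,h\rangle/x$ and so $f'$ is bounded but not compactly supported. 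The argument is easily repaired: the same asymptotic shows $xf'(x)\to\langle\gamma,h\rangle$ at $\pm\infty$, so $f'$, $\iota\cdot f'$, and hence $f''=h-\langle\gamma,h\rangle+\iota\cdot f'$ are all bounded, which is exactly what you need to invoke the hypothesis. Alternatively you could first reduce to test functions with $\langle\gamma,h\rangle=0$, for which your compact-support claim is genuinely correct, and handle the constant separately.
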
 
Identity \eqref{eq:Steincharact} sets the foundations of the so called ``Stein's Heuristics'', which suggests that   \eqref{eq:idenmugammanai} is close to zero when \eqref{eq:Steincharact} is. The formalization of these ideas is implemented through the Stein equation, which we discuss next. \\

\noindent \textit{The Stein Equation}\\
Recall that $\mathcal{P}(\R)$ denotes the set of probability measures over $\R$ and denote by $\mathcal{P}_{\ell}(\R)$ the set of elements in $\mathcal{P}(\R)$ with finite absolute moments of order $\ell$. For a given element $\mu\in\mathcal{P}_{\ell}(\R)$, the quantity $m_{\ell}[\mu]$   denotes the moment of order $\ell$ of $\mu$. Let $\mathcal{K}$ be a symmetric subset of $\mathcal{C}^{2}(\R,\R)$ containing the real and imaginary parts of $e^{\mathbf{i}\xi x}$, for $\xi\in\R$, we can define the distance
\begin{align}\label{dKdef}
d_{\mathcal{K}}(\mu,\nu)
  &=\sup_{f\in\mathcal{K}}|\langle f,\mu-\nu\rangle|	.
\end{align}
Consider as well the operator $\mathcal{L}$, defined through 
\begin{align}\label{eq:eqLdef}
\mathcal{L}[g](x)
  &:=-xg(x)+Dg(x).
\end{align}
Then, by Stein's lemma, the validity of
\begin{align}\label{eq:Steinheuristic}
\langle \mu,\mathcal{L}[Df]\rangle=0,
\end{align}
for $f$ twice differentiable, satisfying $Df\in \mathcal{K}$ integrable with respect to $\mu$,  implies that $\mu$ is the standard Gaussian distribution. For a given element $h\in \mathcal{C}^{0}(\R,\R)$, consider the Stein equation 
\begin{align}\label{eq:Steineq}
\mathcal{L}[Df](x)=h(x)-\langle \gamma,h\rangle,
\end{align}
where $\gamma$ denotes the standard Gaussian distribution and $h$ is assumed to be integrable with respect to $\gamma$. Assuming we can guarantee the existence of a solution to this equation, by first integrating both sides of \eqref{eq:Steineq} with respect to $\mu$, and then taking supremum over $h\in \mathcal{K}$, we obtain
\begin{align*}
\sup_{f\in \mathcal{S}[\mathcal{K}]}\langle\mu,\mathcal{L}[Df]\rangle
=d_{\mathcal{K}}(\gamma,\mu),
\end{align*}
where $\mathcal{S}[\mathcal{K}]$ denotes the set of functions $f$ obtained as the solution to \eqref{eq:Steineq}. The problem of describing the proximity of $d_{\mathcal{K}}(\gamma,\mu)$ around zero thus reduces to finding upper bounds for the expression 
\begin{align*}
\langle\mu,\mathcal{L}[Df]\rangle,
\ \ \ 
\text{ for } \ f\in\mathcal{S}[\mathcal{K}].
\end{align*}
Typically, the dependence of $\langle\mu,\mathcal{L}[Df]\rangle$ over the underlying function $h$ that characterizes $f$ through \eqref{eq:Steineq}, is removed from the analysis by proving that $\mathcal{S}[\mathcal{K}]$ is contained in a larger, but easier to describe, set $\mathcal{X}$; so that the supremum of $\langle\mu,\mathcal{L}[Df]\rangle$ over $\mathcal{S}[\mathcal{K}]$ is bounded by the supremum over $\mathcal{X}$.\\

\noindent \textit{The semigroup approach for solving Stein's equation}\\
An important problem that gets hidden in the aforementioned program is the description of the properties of the solution $f$ to the Stein equation \eqref{eq:Steincharact}. How do we know there even exists a solution and more importantly, how do we know that it will lie within a reasonably easy to describe set $\mathcal{X}$? One way of dealing with these questions is by the implementation of the so-called semigroup approach, which we describe next: Let $P_{\theta}$, for $\theta\geq0$, denote the collection of operators defined on the space of functions integrable with respect to the push forward of any affine transformation of $\gamma$, taking the form
\begin{align*}
P_{\theta}[h](x)
  &:=\int_{\mathbb{R}}h(e^{-\theta}x+\sqrt{1-e^{-2\theta}}y)\gamma(dy).
\end{align*}
It is straightforward to check that $\{P_{\theta}\}_{\theta\geq 0}$ is a semigroup and that its generator is well defined over the set of twice differentiable functions with second derivative having polynomial growth. Its value coincides with the operator $\mathcal{L}\circ D$, with $\mathcal{L}$ defined by \eqref{eq:eqLdef}, for functions $f$ such that  $Df$ belongs to the domain of $\mathcal{L}$. Moreover, we have that $P_{\infty}[h](x):=\lim_{\theta\rightarrow\infty}P_{\theta}[h](x)=\langle \gamma,h\rangle$, while $P_{0}[h](x)=h(x)$, so that
\begin{align}\label{interpolationclassic}
\langle \gamma,h\rangle-h(x)=P_{\infty}[h](x)-P_{0}[h](x)
  &=\int_0^{\infty}\frac{d}{d\theta}P_{\theta}[h](x)d\theta\nonumber\\
  &=\int_0^{\infty}\mathcal{L}\circ D \circ P_{\theta}[h-\langle\gamma,h\rangle](x)d\theta\nonumber\\
  &=\mathcal{L}\circ D[\int_0^{\infty}(P_{\theta}[h]-\langle\gamma,h\rangle)d\theta](x).
\end{align}
Observe that we have added the constant function $\langle\gamma,h\rangle$ to the argument of the operator $\mathcal{L}\circ D,$ which annihilates constants. The purpose of this is  guaranteeing the well-posedness of the infinite integral over $\theta$ appearing at the right hand side of \eqref{interpolationclassic}. From the above discussion it follows that a solution of \eqref{eq:Steinheuristic}, which can be proved to be unique due to elementary results from ODE, is given by the formula 
\begin{align*}
\mathcal{S}[h](x)
  &:=\int_0^{\infty}(P_{\theta}[h](x)-\langle\gamma,h\rangle)d\theta.
\end{align*}
In other words, with the notation previously introduced, $\mathcal{L}\circ D\circ \mathcal{S}[h](x)=h(x)-\langle\gamma,h\rangle$, and consequently,
\begin{align*}
\langle \mu,\mathcal{L}\circ D\circ \mathcal{S}[h]\rangle
  &=\langle \mu,h\rangle-\langle \gamma,h\rangle.
\end{align*}
The regularity properties of $\mathcal{S}[h]$ are typically inherited\footnote{Actually, the solution $\mathcal{S}[h]$ improves the smoothness properties of $h$, rather than inheriting them} from those of $h$ due to fact that  $P_{\theta}[h](x)$ typically improves the regularity properties of $h$ as it is obtained as a convolution against a Gaussian kernel.\\

\noindent \textit{A classical Berry-Esseen type theorem}\\
For expository purposes, assume that $\mathcal{K}$ consists of the elements of $\mathcal{C}^{3}(\R;\R)$, that satisfy $\|f^{\prime\prime\prime}\|_{\infty}\leq 1$. Under this hypothesis, we have that $\|\mathcal{S}[h]^{\prime\prime\prime}\|_{\infty}\leq 1$ for all $h\in\mathcal{K}$. The treatment of the expression $\langle \mu,\mathcal{L}\circ D\circ \mathcal{S}[h]\rangle$ is relatively easy to carry when $\mu$ is the law of a variable $S_{n}$ of the form $S_{n}=\xi_{1,n}+\cdots+\xi_{n,n}$, with $\xi_{j,n}$  independent, square integrable and centered random variables with $\sum_{j} \mathrm{Var}[\xi_{j,n}]=1$. For this particular instance, we can write
\begin{align*}
\langle \mu,\mathcal{L}\circ D\circ \mathcal{S}[h]\rangle
  &=\sum_{j=1}^n\mathbb{E}[\xi_{j,n}D\mathcal{S}[h](S_{n}^{j}+\xi_{j,n})
  -\mathbb{E}[\xi_{j,n}^2]D^{2}\mathcal{S}[h](S_{n})],
\end{align*}
where $S_{n}^{j}:=S_{n}-\xi_{j,n}$. Since $S_{n}^{j}$ and $\xi_{j,n}$ are independent, it then follows that 
$$\mathbb{E}[\xi_{j,n}D\mathcal{S}[h](S_{n}^{j})]=0,$$ 
thus implying that 
\begin{align*}
\langle \mu,\mathcal{L}\circ D\circ \mathcal{S}[h]\rangle
  &=\sum_{j=1}^n\mathbb{E}[\xi_{j,n}
  (D\mathcal{S}[h](S_{n}^{j}+\xi_{j,n})-D\mathcal{S}[h](S_{n}^{j}))
  -\mathbb{E}[\xi_{j,n}^2]D^{2}\mathcal{S}[h](S_{n})].
\end{align*}
By Taylor's theorem, we thus obtain
\begin{align*}
\langle \mu,\mathcal{L}\circ D\circ \mathcal{S}[h]\rangle
  &=\sum_{j=1}^n\mathbb{E}[\xi_{j,n}^3D^{3}\mathcal{S}[h](S_{n}^{j}+\eta_{j,n})],
\end{align*}
for some appropriate random variables $\eta_{j,n}$. From here it follows that 
\begin{align*}
|\langle \mu,\mathcal{L}\circ D\circ \mathcal{S}[h]\rangle|
  &\leq \|D^{3}\mathcal{S}[h]\|_{\infty}\sum_{j=1}^n\mathbb{E}[|\xi_{j,n}^3|],
\end{align*}
thus yielding a sharp bound for $|\langle \mu,\mathcal{L}\circ D\circ \mathcal{S}[h]\rangle|$ for $h\in\mathcal{K}$ and inducing a quantification of the error in the central limit theorem approximation under the metric \eqref{dKdef}.\\

\section{Stein's method in the non-commutative setting}\label{sec:steinmethodlologyNC}	
Now we turn to the main topic of this paper: what happens with the Stein method perspective for proving a central limit theorem when we replace the classical independence of the variables $\xi_{i,n}$ with free independence? \\

\subsection{Outline of the main ideas} A close look to Section \ref{Section:ClassicalSteinMethod} suggest three fundamental parts in the analysis: (i) an analog to  Stein's lemma (ii) a formulation of a Stein equation,  with an adequate analysis of its solution and (iii) An easy implementation to the case of sums of independent random variables. We would like to emphasize the importance of point (iii), as there is a potentially vast choice of different characterizations of the semicircular distribution appearing in Theorem \ref{eq:freclt}.  By examining the proof of the Berry-Esseen theorem in the classical case, one notices that if $\xi_{1,n},\dots, \xi_{n,n}$ are free non-commutative self-adjoint centered random variables defined in $(\mathcal{A},\tau)$ and $S_{n}$ is given by
\begin{align*}
S_{n}
  &:=\sum_{k=1}^n\xi_{k,n},
\end{align*}
the computation of the asymptotics of $\mathbb{E}[S_{n}f(S_{n})]$ is a cornerstone for the solution to the problem in hand. This computation is tractable for the case of free random variables, and as explained in full detail in Section \ref{sec:ncSteineqasd}, the estimation takes the form 
\begin{align*}
\tau[S_{n}f(S_{n})]
  &\approx \tau[(S_{n}-\tilde{S}_{n})^{-1}(f(S_{n})-f(\tilde{S}_{n}))],
\end{align*}
where $\tilde{S}_{n}$ is a tensor independent copy of $S_{n}$. We will also show that in terms of non-commutative differentiable operators, the above identity can be written as follows: define 
 the operator 
 $$
 \begin{array}{ccc}
 \mathcal{L}_{\boxplus}:\mathcal{C}^{1}(\R;\R)&\rightarrow& \mathcal{C}^{0}(\R^2;\R)\\
  g&\mapsto &\mathcal{L}_{\boxplus}[g],
 \end{array}
$$ 
with
\begin{align}\label{eq:Tdef}
\mathcal{L}_{\boxplus}[g](x,y)
  &:=-xg(x)+\frac{g(y)-g(x)}{y-x},
\end{align}
first defined for $x\neq y$ and extended continuously to $\{(x,y)\in\R^{2}\ ;\ x=y\}$. Then we have that 
\begin{align}\label{eq:langleAoverf}
\langle \mu\otimes\mu,\mathcal{L}_{\boxplus}[g]\rangle
  &\approx 0,
\end{align}
for $g=Df$, with $f\in\mathcal{C}^{2}(\R;\R)$. Relation \eqref{eq:langleAoverf} naturally suggests a Stein equation, which can be proved to characterize the semicircular distribution, at the time that offers the possibility to be easily implemented into the framework of the free central limit theorem. In oder to formalize the above free Stein heuristic, the most natural procedure is  to consider the equation
\begin{align}\label{NCsteinone}
\mathcal{L}_{\boxplus}[Df]
  &=h-\langle h,\mathbf{s}\rangle.	
\end{align}
Assuming the well-possednes of the solution $f=\mathcal{S}_{\boxplus}[h]$ to this equation, we would have the identity
\begin{align*}
\mathcal{L}_{\boxplus}\circ D\circ \mathcal{S}_{\boxplus}[h](x,y)
  &= h(x)-\langle \mathbf{s},h\rangle.
\end{align*}
Integrating with respect to $\mu\otimes \mu$ and taking sup over a suitable family of test functions $h$, we should be able to mimic the arguments from Stein's method obtaining an identity of the type 
\begin{align}\label{eq:Steinorig}
\langle \mu\otimes \mu,\mathcal{L}_{\boxplus}\circ D\circ \mathcal{S}_{\boxplus}[h]\rangle
  &= \langle \mu,h\rangle-\langle \mathbf{s},h\rangle.
\end{align}
 This reduces the problem to showing that the left-hand side of \eqref{eq:Steinorig} is approximately zero. While this is a logical approach, the authors have found it challenging to implement, since ensuring the existence of a solution to equation \eqref{NCsteinone} is not evident, let alone proving its regularity properties. In order to avoid dealing with this  problem, we propose a simple alternative approach: instead of aiming for an identity of the type \eqref{eq:Steinorig}, we will follow an interpolation argument close in spirit to the the formulation of the solution to the classical Stein equation by the generator approach, which will yield a natural alternative to the Stein equation \eqref{eq:Steinorig}, in which the solution operator $\mathcal{S}_{\boxplus}$ does not act over the test function $h$, but rather over the underlying measure $\mu$, and   reads 
\begin{align}\label{eq:Steinnc}
\langle \mathcal{S}_{\boxplus}^{*}[\mu],\mathcal{L}_{\boxplus}\circ D[h]\rangle
  &= \langle\mu,h\rangle-\langle \mathbf{s},h\rangle,
\end{align}
for an explicit operator $\mathcal{S}_{\boxplus}^{*}$ defined over measures and taking values over signed measures. Although $\mathcal{S}_{\boxplus}^{*}$ is not defined as an adjoint operator, we have marked it with an upper asterisk  to emphasize the fact that it operates over the left side of the bracket in the dual pairings. Relation \eqref{eq:Steinnc} will be referred to as the dual free Stein equation. The particular shape of $\mathcal{S}_{\boxplus}^{*}$, to be discussed in detail in Section \ref{sec:ncSteineqasd}, formally, takes the form
\begin{align}\label{eq:Udefintegral}
\mathcal{S}_{\boxplus}^{*}[\mu]
  :=\int_0^{\infty}(P_{\theta}^{*}[\mu]\otimes P_{\theta}^{*}[\mu]- \mathbf{s}\otimes \mathbf{s})d\theta,
\end{align}
where $P_{\theta}$ is defined over the set of probability measures $\mathcal{P}(\R)$, takes values on the set $\mathcal{P}(\mathbb{R}^{2})$ of probability measures over $\R^{2}$ and is given as  
\begin{align*}
P_{\theta}^{*}[\mu]
  &:=\dilation_{e^{-\theta}}[\mu]\boxplus \dilation_{\sqrt{1-e^{-2\theta}}}[\mathbf{s}].
\end{align*}

\begin{remark}
As shown in the forthcoming Lemma \ref{Steinslemma}, the identity $\langle \mathbf{s}\otimes \mathbf{s},\mathcal{L}_{\boxplus}[Df]\rangle=0$ holds for continuously differentiable functions with bounded first derivative, yielding 
\begin{align*}
\langle \mathcal{S}_{\boxplus}^{*}[\mu],\mathcal{L}_{\boxplus}[Dh]\rangle
  &=\int_{0}^{\infty}\langle P_{\theta}^{*}[\mu]\otimes P_{\theta}^{*}[\mu],\mathcal{L}_{\boxplus}[Dh]\rangle d\theta,
\end{align*}
so when restricted to test functions of the form $\mathcal{L}_{\boxplus}[Dh]$, the signed measure $\mathcal{S}_{\boxplus}^{*}[\mu]$ can be  thought of as having the formal expression 
\begin{align*}
\mathcal{S}_{\boxplus}^{*}[\mu]
  &=\int_{0}^{\infty}  P_{\theta}^{*}[\mu]\otimes P_{\theta}^{*}[\mu]d\theta.
\end{align*}
The compensator $\mathbf{s}\otimes \mathbf{s}$ in  \eqref{eq:Udefintegral} then serves mainly the purpose of guaranteeing the well-posedness of the action of $\mathcal{S}_{\boxplus}^{*}[\mu]$ over a large domain of functions $g$, instead of only those of the form $g=\mathcal{L}_{\boxplus}[Dh]$. 
\end{remark}
The rest of this section is devoted to developing these ideas. In the sequel, $\mathcal{P}_{\infty}(\R)$ will denote the subset of $\bigcap_{\ell}\mathcal{P}_{\ell}(\R)$, which are characterized by moments. 

\subsection{Non-commutative Stein's Lemma}\label{sec:ncSteineqasd}

In this subsection, we establish an analogue of the Stein lemma for the semicircle distribution. Recall the definition of  $\mathcal{L}_{\boxplus}$, given by \eqref{eq:Tdef}. 


\begin{proposition}[Stein's Lemma]\label{Steinslemma}
If $\mu$ is a  probability measure with moments of arbitrary order, then $\mu$ is the standard semicircular distribution if and only if, for all $f\in \mathcal{C}^{1}(\R;\R)$, 
\begin{equation}
\label{eq:SemicircleSteinLemma}
    \langle \mu\otimes\mu, \mathcal{L}_{\boxplus}[f]\rangle = 0.
\end{equation}
\end{proposition}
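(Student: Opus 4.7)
The strategy is to unfold the definition of $\mathcal{L}_\boxplus$, reduce the identity to a recurrence on the moments of $\mu$, and then conclude by moment determinacy.

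First, I would observe that the identity $\langle \mu\otimes\mu,\mathcal{L}_\boxplus[f]\rangle = 0$ unfolds, by Fubini and the fact that $\mu$ is a probability measure, to
$$-\int_\R x f(x)\,\mu(dx) + \int_{\R^2}\frac{f(y)-f(x)}{y-x}\,\mu(dx)\mu(dy) = 0,$$
where the second integrand is understood through its continuous extension to the diagonal (equal there to $f'(x)$, hence bounded on compacta by continuity of $f'$). Substituting the polynomial test functions $f(x) = x^n$ and expanding $\tfrac{y^n-x^n}{y-x} = \sum_{k=0}^{n-1}x^k y^{n-1-k}$, this identity reduces to the recurrence
$$m_{n+1}[\mu] = \sum_{k=0}^{n-1} m_k[\mu]\, m_{n-1-k}[\mu], \qquad n \geq 0,$$
with the empty sum at $n=0$ read as zero.

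For the ``only if'' direction, I would verify the displayed recurrence directly for $\mu = \mathbf{s}$. This is the classical Catalan-type recurrence satisfied by the semicircle moments, and may be established either by a short induction, or by applying Proposition \ref{prop:krewerasprop} to two free semicirculars and exploiting that the only nonvanishing free cumulant of $\mathbf{s}$ is $\kappa_2$. Having the identity in hand for all polynomials, I would then extend it to arbitrary $f\in\mathcal{C}^1(\R;\R)$ by a standard Stone--Weierstrass approximation on the compact support of $\mathbf{s}$, using the continuity of both terms of the identity under uniform convergence of $(f_n, f_n')$ on compacta.

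For the ``if'' direction, applying the hypothesis to $f(x) = x^n$ yields exactly the same recurrence for the moments $m_n[\mu]$ of $\mu$. Together with the constraint $m_0[\mu] = 1$, an immediate induction shows that $m_n[\mu]$ is uniquely determined and equal to the $n$-th moment of $\mathbf{s}$: the odd moments vanish, and the even moments are the Catalan numbers $C_n$. Since $C_n \leq 4^n$, Carleman's condition holds, so the Hamburger moment problem is determinate, and $\mu$ must coincide with the unique probability measure with these moments, namely $\mathbf{s}$.

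The only real obstacle I anticipate is the moment-determinacy step in the ``if'' direction, since the hypothesis only assumes existence of all moments without any a priori control on the tails of $\mu$. Nevertheless, the sub-exponential Catalan growth emerging from the recurrence resolves this for free via Carleman's criterion, so no strengthening of the assumption (such as requiring $\mu\in\mathcal{P}_\infty(\R)$) is needed to close the argument.
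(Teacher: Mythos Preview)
Your proposal is correct and mirrors the paper's proof almost exactly: both test against the monomials $f(x)=x^r$, derive the same Catalan-type recurrence $m_{r+1}[\mu]=\sum_{k=0}^{r-1}m_k[\mu]\,m_{r-1-k}[\mu]$, conclude the ``if'' direction by moment determinacy, and handle the ``only if'' direction by approximation from polynomials. The only cosmetic difference is that you invoke Carleman's criterion explicitly via the bound $C_n\le 4^n$, whereas the paper simply notes that $\mathbf{s}$ has compact support and is therefore characterized by its moments; these amount to the same thing.
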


\begin{proof}
We first show that if \eqref{eq:SemicircleSteinLemma} holds for every   $f\in \mathcal{C}^{1}(\mathbb{R};\mathbb{R})$, then $\mu=\mathbf{s}$. By taking $f(x) = x^{r}$ with $r\geq0$, we can write
\begin{equation*}
    \langle \mu\otimes\mu , \mathcal{L}_{\boxplus}[f] \rangle = \int_{\R} x^{r+1} \mu(dx) - \int_{\R^2} \frac{y^r-x^r}{y-x} \mu(dx)\mu(dy) = 0.
\end{equation*}
In particular, by taking $r=0$ we obtain that $m_{1}[\mu] = \langle \mu, \iota\rangle = 0$.
Similarly, for all $r\geq 1$,
\begin{equation*}
\int_{\R} x^{r+1} \mu(dx) - \sum_{k=0}^{r-1} \int_{\R^2} y^{k} x^{r-1-k} \mu(dx)\mu(dy)=0.
\end{equation*}
Therefore, the moments of $\mu$ satisfy the relation
\begin{equation*}
    m_{r+1}[\mu] = \sum_{k=0}^{r-1} m_{k}[\mu] m_{r-1-k}[\mu].
\end{equation*}
The above recursive relation and the fact that $m_{1}[\mu]=0$ imply that $m_k[\mu] = m_k[\mathbf{s}]$ for all $k\geq0$. Since the semicircular distribution has compact support, it is characterized by its moments and, as a result, $\mu = s$.\\

\noindent It remains to prove that if $\mu=\mathbf{s}$, then \eqref{eq:SemicircleSteinLemma} holds for all $f\in\mathcal{C}(\R;\R)$. By reversing the argument in the previous paragraph, we can prove that \eqref{eq:SemicircleSteinLemma} holds for polynomials. Then, an approximation argument implies that the same relation holds for all $f\in C^{1}(\mathbb{R};\mathbb{R})$.
\end{proof}


\subsection{Non-commutative Stein's equation}
In this section, we study the Stein equation associated to Lemma \ref{eq:SemicircleSteinLemma}. We begin by introducing the semicircular Ornstein-Uhlenbeck semigroup, which will serve as an interpolation between $\mu$ and $\mathbf{s}$.

\begin{definition}
For each $\theta\geq0$, we define the $\mathcal{P}(\R)$-valued operator $P_{\theta}$, defined over the domain $\mathcal{P}_2(\R)$ by
\begin{equation*}
    P_{\theta}^{*}[\mu] \coloneqq \dilation_{e^{-\theta}}[\mu] \boxplus \dilation_{\sqrt{1-e^{-2\theta}}}[\mathbf{s}].
\end{equation*}
\end{definition}

\noindent It is straightforward to verify that $\{P_{\theta}^{*}\}_{\theta\geq 0}$  is a semigroup over $\mathcal{P}_2(\R)$. It is a semicircular analog of the Gaussian Ornstein-Uhlenbeck semigroup, as
\begin{equation*}
    \langle P_{\theta}^{*}[\mu], h \rangle = \tau\big[h(e^{-\theta}x+\sqrt{1-e^{-2\theta}}z)\big],
\end{equation*}
where $x$ is a selfadjoint non-commutative random variable with analytic distribution $\mu$ and $z$ is a free centered semicircular elements with the same mean and variance as $x$.  We point out that the definition of the dual Ornstein-Uhlenbeck semigroup differs slightly from the earlier standardized version, with this modification being necessary for future computations. Some of our arguments rely on the fact that, as in the classical case,  $\boxplus$ convolution satisfies the following smoothing property, which is a particular case of  \cite[Theorem 1.1]{MR4712710}
\begin{theorem}\label{teo:abscont}
If $\mu,\nu\in\mathcal{P}(\R)$ are such that $\mu$ is absolutely continuous, then $\mu\boxplus\nu	$ is absolutely continuous.
\end{theorem}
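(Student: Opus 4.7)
The plan is to exploit the subordination principle for free additive convolution. By the classical results of Voiculescu, Biane, and Belinschi, there exist analytic self-maps $\omega_1, \omega_2 : \mathbb{C}^+ \to \mathbb{C}^+$ satisfying $G_{\mu \boxplus \nu}(z) = G_\mu(\omega_1(z)) = G_\nu(\omega_2(z))$ for every $z$ in the open upper half-plane $\mathbb{C}^+$, where $G_\rho(z) := \int_{\R}(z-t)^{-1}\rho(dt)$ denotes the Cauchy transform of $\rho$. Moreover, $\omega_1$ is a Pick function with $\mathrm{Im}\,\omega_1(z) \geq \mathrm{Im}\, z$. By the Stieltjes inversion formula, absolute continuity of $\mu \boxplus \nu$ is equivalent to the non-tangential boundary values $-\tfrac{1}{\pi}\mathrm{Im}\, G_{\mu \boxplus \nu}(x+i0^+)$ defining an $L^1(\R)$ density of total mass one, so it suffices to analyze the boundary behavior of $G_\mu \circ \omega_1$ on $\R$.

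First I would recall that the hypothesis that $\mu$ is absolutely continuous, with density $f_\mu$, implies that $G_\mu$ admits non-tangential boundary values at almost every $x \in \R$, with $-\tfrac{1}{\pi}\mathrm{Im}\, G_\mu(x+i0^+) = f_\mu(x)$ a.e. Next, since $\omega_1$ is analytic on $\mathbb{C}^+$ with image in $\overline{\mathbb{C}^+}$, Fatou's theorem yields boundary limits $\omega_1(x+i0^+) \in \overline{\mathbb{C}^+}$ for a.e.\ $x \in \R$. On the set $E^+ := \{x : \mathrm{Im}\,\omega_1(x+i0^+) > 0\}$ the composition $G_\mu \circ \omega_1$ is automatically bounded and continuous, so it contributes an absolutely continuous piece to $\mu \boxplus \nu$. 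The remaining task is to control $E^0 := \{x : \mathrm{Im}\,\omega_1(x+i0^+) = 0\}$.

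The main obstacle is precisely to show that $E^0$ has Lebesgue measure zero, which is where the absolute continuity of $\mu$ enters decisively: without this hypothesis $\mu \boxplus \nu$ can possess an atom, as the classical example of the free convolution of two atomic Bernoulli measures shows. The idea is to combine the boundary subordination identity $\omega_1(x+i0^+) + \omega_2(x+i0^+) - x = 1/G_{\mu \boxplus \nu}(x+i0^+)$ with the fact that $\mathrm{Im}\, G_\mu$ vanishes only on a Lebesgue-null set when $\mu$ is absolutely continuous, in order to argue that a real boundary value of $\omega_1$ on a set of positive measure inside $E^0$ would force $G_{\mu \boxplus \nu}$ either to blow up or to have purely real boundary values on that set, neither of which can occur for a Cauchy transform. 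A final total-mass verification $\int_{\R} -\tfrac{1}{\pi}\mathrm{Im}\, G_{\mu \boxplus \nu}(x+i0^+)\, dx = 1$ then certifies that the resulting density captures all of $\mu \boxplus \nu$, eliminating any singular continuous part. This is exactly the content of \cite[Theorem 1.1]{MR4712710}, of which our statement is a direct specialization.
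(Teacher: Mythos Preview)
The paper does not give its own proof of this theorem; it simply records the statement as a particular case of \cite[Theorem 1.1]{MR4712710} and moves on. Your proposal ultimately does the same thing---you close by invoking that exact citation---so at the level of what the paper actually contains, your approach matches.

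That said, the sketch you insert before the citation contains two inaccuracies that would need repair if you intended it to stand on its own. First, the assertion that ``$\mathrm{Im}\, G_\mu$ vanishes only on a Lebesgue-null set when $\mu$ is absolutely continuous'' is false: since $-\tfrac{1}{\pi}\mathrm{Im}\,G_\mu(x+i0^+)=f_\mu(x)$ a.e., the imaginary part of the boundary value vanishes precisely on $\{f_\mu=0\}$, which can easily have positive measure (any compactly supported $\mu$ gives an example). Second, the claim that a Cauchy transform cannot have purely real boundary values on a set of positive measure is also false for the same reason. The actual argument behind \cite{MR4712710} is more delicate: one must show that on $E^0$ the approach $z\to x$ forces $\omega_1(z)$ to approach $\R$ \emph{non-tangentially}, so that $G_\mu(\omega_1(z))$ inherits the a.e.\ finite boundary behavior of $G_\mu$, and then separately rule out atoms and a singular continuous part. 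Since the paper itself only cites the result, the cleanest fix is simply to drop the sketch and retain the citation.
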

From Theorem \ref{teo:abscont}, it follows that $P_{\theta}^{*}[\mu]$ is absolutely continuous for every $\mu\in\mathcal{P}_2(\R)$. This property allows us to prove the following useul representation for the value of $\mathcal{L}_{\boxplus}^{*}$, when restricted to the image of $P_{\theta}^{*}$. 
\begin{proposition}
\label{Theorem:GeneratorSemigroup}
If $\mu\in\mathcal{P}_2(\R)$ and $h\in\mathcal{C}^{1}(\R;\R)$ is bounded, then  
\begin{equation*}
    \lim_{\theta\to0^{+}} \frac{\langle P_{\theta}^{*}[\nu], h \rangle - \langle \mu, h \rangle}{\theta} = \int_{\R^2}\mathcal{L}_{\boxplus}^{*}[Dh](r)(\nu\otimes\nu)(\mathrm{d}r),
\end{equation*}
where $\nu:=P_{\theta}^{*}[\mu].$
\end{proposition}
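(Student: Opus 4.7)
My plan is to reduce to polynomial test functions and compute via the moment--cumulant formula of free probability, then extend to bounded $\mathcal{C}^1$ test functions by approximation. I realize $Y_\theta$ with distribution $P_\theta^*[\mu]$ as $Y_\theta=e^{-\theta}X+\sqrt{1-e^{-2\theta}}\,Z$ inside some non-commutative probability space $(\mathcal{A},\tau)$, where $X$ has analytic distribution $\mu$ and $Z$ is a standard semicircular element free from $X$. Since $\kappa_k(\mathbf{s})=\delta_{k,2}$, the additivity and homogeneity of free cumulants yield
\begin{equation*}
\kappa_k(Y_\theta)\;=\;e^{-k\theta}\,\kappa_k(\mu)+\delta_{k,2}(1-e^{-2\theta}),\qquad k\ge 1,
\end{equation*}
so $\frac{d}{d\theta}\kappa_k(Y_\theta)\big|_{\theta=0}=-k\,\kappa_k(\mu)+2\,\delta_{k,2}$, which will be the only analytic input needed at the level of polynomials.

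\textbf{Polynomial case.} For $h(x)=x^n$, I substitute the above into the moment--cumulant formula $m_n[P_\theta^*[\mu]]=\sum_{\pi\in NC(n)}\prod_{V\in\pi}\kappa_{|V|}(Y_\theta)$ and differentiate termwise at $\theta=0$. The $-k\,\kappa_k(\mu)$ contribution telescopes, via $\sum_{V\in\pi}|V|=n$, to $-n\,m_n[\mu]$, matching the $-x\,Dh(x)$ part of $\mathcal{L}_\boxplus[Dh]$. The $2\,\delta_{k,2}$ contribution isolates non-crossing partitions carrying a marked two-block $\{i,j\}$; because the remaining blocks must split non-crossingly between the interval $(i,j)$ and its complement, a further application of the moment--cumulant formula on each side reduces the inner cumulant product to $m_{j-i-1}[\mu]\,m_{n-1-(j-i)}[\mu]$. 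Summing over pairs $1\le i<j\le n$ and symmetrizing in $(j-i)\leftrightarrow(n-(j-i))$ yields $n\sum_{k=0}^{n-2}m_k[\mu]\,m_{n-2-k}[\mu]$, which coincides with $\int_{\R^2}\mathcal{L}_\boxplus[Dh]\,d(\mu\otimes\mu)$ for $Dh(x)=n\,x^{n-1}$, as required.

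\textbf{Extension and main obstacle.} To pass from polynomials to a general bounded $h\in\mathcal{C}^1(\R;\R)$, I would approximate $h$ and $Dh$ uniformly on compact sets by polynomials and use the $L^2(\tau)$ estimate $\tau[(Y_\theta-X)^2]=O(\theta)$ together with dominated convergence to transfer the polynomial identity to $h$; the absolute continuity of $P_\theta^*[\mu]$ for $\theta>0$, guaranteed by Theorem~\ref{teo:abscont}, regularizes the difference quotient in $\theta$. The main obstacle lies precisely in this extension step: one has to control the trace-level Taylor remainder $\tau[h(Y_\theta)-h(X)-Dh(X)(Y_\theta-X)]=o(\theta)$ for $h$ only $\mathcal{C}^1$, with $Dh$ not assumed bounded. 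The linear part of this remainder vanishes by freeness of $Z$ from $X$ together with $\tau[Z]=0$, while the genuinely non-commutative second-order contribution is exactly what produces the free difference quotient $\frac{Dh(y)-Dh(x)}{y-x}$ after integration against $\mu\otimes\mu$; the finite-second-moment hypothesis $\mu\in\mathcal{P}_2(\R)$ enters essentially here, since it bounds $\tau[X^2]$ and thereby controls the $-\theta X$ contribution to $Y_\theta-X$.
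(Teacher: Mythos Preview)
Your polynomial computation is correct and takes a genuinely different route from the paper. The paper expands $(e^{-\theta}x+\sqrt{1-e^{-2\theta}}y)^p$ directly as a sum over words in $x,y$, discards words with three or more $y$'s as $O(\theta^{3/2})$, kills the one-$y$ words by $\tau[y]=0$ and freeness, and evaluates the two-$y$ words via the trace property and the explicit mixed-moment rule \eqref{eq:firstmomentsfreeness}, arriving at $-p\,\tau[x^p]+p\sum_{\ell=0}^{p-2}\tau[x^\ell]\tau[x^{p-2-\ell}]$. You instead differentiate the moment--cumulant identity $m_n=\sum_{\pi\in NC(n)}\prod_{V\in\pi}\kappa_{|V|}(Y_\theta)$ using $\partial_\theta\kappa_k(Y_\theta)|_{\theta=0}=-k\,\kappa_k(\mu)+2\delta_{k,2}$; the $-k\,\kappa_k$ piece collapses by $\sum_V|V|=n$, and the $2\delta_{k,2}$ piece becomes a sum over a marked pair block, which factors by the inside/outside decomposition into $m_{j-i-1}\,m_{n-1-(j-i)}$ and then symmetrizes to the same answer. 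Your route is cleaner in that the freeness is entirely encoded in the additivity of cumulants and the non-crossing structure, avoiding any case analysis of mixed moments; the paper's route is more elementary in that it never invokes cumulants at all and stays at the level of the basic freeness relations.

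On the extension to general bounded $h\in\mathcal{C}^1$: the paper is equally terse here, saying only that an approximation argument works because $\nu\otimes\nu$ does not charge the diagonal. Your identification of the obstacle is accurate, but note two points. First, the role of absolute continuity is not to ``regularize the difference quotient in $\theta$'' but to guarantee that the limit object $\int\mathcal{L}_\boxplus[Dh]\,d(\nu\otimes\nu)$ is well defined when $Dh$ is merely continuous (so the divided difference need not extend continuously to the diagonal); this is why the paper works with $\nu=P_{\theta_0}^*[\mu]$ rather than $\mu$ itself. Second, polynomial approximation uniform on compacta is only immediately sufficient when $\nu$ has compact support; under the bare hypothesis $\mu\in\mathcal{P}_2(\R)$ you would need an additional truncation or tightness argument, which neither you nor the paper supplies.
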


\begin{proof}
For ease of notation, let $x,y$ be free selfadjoint non-commutative random variables with analytic distributions $\nu$ and $\mathbf{s}$ respectively. For $p\in\mathbb{N}$, let $h_{p}(x) = x^{p}$. By an explicit computation, 
\begin{align*}
\langle P_{\theta}^{*}[\nu], h_{p} \rangle
  &=\tau[(e^{-\theta}x+\sqrt{1-e^{-2\theta}}y)^{p}]\\
  &=\sum_{\epsilon_{1},\dots, \epsilon_{p}\in\{0,1\}}
  e^{-\theta\sum_{l=1}^p\epsilon_l}(1-e^{-2\theta})^{\frac{p-\sum_{l=1}^p\epsilon_l}{2}}\tau[a_{\epsilon_1}\dots a_{\epsilon_{p}}],
\end{align*}
where the $a_{\epsilon_j}$'s are defined by 
$$a_{\epsilon_j}:=\left\{\begin{array}{lll}x& \text{ if } &\epsilon_j=1\\y& \text{ if } &\epsilon_j=0.\end{array}\right.$$
Observe that when $\sum_{l=1}^p\epsilon_l\leq p-3$, we have that 
\begin{align*}
|1-e^{-2\theta}|^{\frac{p-\sum_{l=1}^p\epsilon_l}{2}}
  &\leq C\theta^2,
\end{align*}
for some constant $C>0$ independent of $\theta$.
From here it follows that 
\begin{align*}
|\langle P_{\theta}[\nu], h_{p} \rangle
  -\sum_{\substack{\epsilon_{1},\dots, \epsilon_{p}\in\{0,1\}\\\sum_{l=1}^p\epsilon_l\in\{p,p-1,p-2\}}}
  e^{-\theta\sum_{l=1}^p\epsilon_l}(1-e^{-2\theta})^{\frac{p-\sum_{l=1}^p\epsilon_l}{2}}\tau[a_{\epsilon_1}\dots a_{\epsilon_{p}}]|
  &\leq C\theta^2,
\end{align*}
for a possibly different constant $C$, independent of $\theta$. We thus conclude that 
\begin{align}\label{eq:generaux1}
\lim_{\theta\rightarrow0}\frac{\langle P_{\theta}\nu, h_{p} \rangle - \langle \nu, h_{p} \rangle}{\theta}
    &=\lim_{\theta\rightarrow0}\frac{1}{\theta}\sum_{\substack{\epsilon_{1},\dots, \epsilon_{p}\in\{0,1\}\\\sum_{l=1}^p\epsilon_l=p}}
  (e^{-\theta p}-1)\tau[a_{\epsilon_1}\dots a_{\epsilon_{p}}]\nonumber\\
  &+\lim_{\theta\rightarrow0}\frac{1}{\theta}\sum_{\substack{\epsilon_{1},\dots, \epsilon_{p}\in\{0,1\}\\\sum_{l=1}^p\epsilon_l=p-1}}
  (1-e^{-2\theta})^{\frac{1}{2}}\tau[a_{\epsilon_1}\dots a_{\epsilon_{p}}]\nonumber\\
    &+\lim_{\theta\rightarrow0}\frac{1}{\theta}\sum_{\substack{\epsilon_{1},\dots, \epsilon_{p}\in\{0,1\}\\\sum_{l=1}^p\epsilon_l=p-2}}
  (1-e^{-2\theta})\tau[a_{\epsilon_1}\dots a_{\epsilon_{p}}].
\end{align}
Observe that when $\sum_{l=1}^p\epsilon_l=p-1$, there exists a unique index $j\in\{1,\dots, p\}$ such that $a_{\epsilon_j}=y$, which by the freeness of $x$ and $y$, yields $\tau[a_{\epsilon_1}\dots a_{\epsilon_{p}}]=0$, so the second term in the right hand side of equation \eqref{eq:generaux1} is equal to zero, implying that
\begin{align}\label{eq:freOUaux}
\lim_{\theta\rightarrow0}\frac{\langle P_{\theta}\nu, h_{p} \rangle - \langle \nu, h_{p} \rangle}{\theta}
    &=-p\tau[x^p]
    +2\sum_{\substack{\epsilon_{1},\dots, \epsilon_{p}\in\{0,1\}\\\sum_{l=1}^p\epsilon_l=p-2}}\tau[a_{\epsilon_1}\dots a_{\epsilon_{p}}].
\end{align}
By localizing the unique two indices $i,j$ satisfying $a_{\epsilon_i}=a_{\epsilon_j}=s$, we observe that the indices $\epsilon_1,\dots, \epsilon_p$ satisfying $\epsilon_1+\dots+\epsilon_p$ are in bijection with the partitions $\pi$ of size two over $\{1,\dots, p\}$. We denote by $\pi[x,y]$ the value of $\tau[a_{\epsilon_1}\dots, a_{\epsilon_p}]$ associated to the corresponding bijection 
$(\epsilon_1,\dots,\epsilon_p)\mapsto \pi$. Observe that $\pi[x,y]
  =\tau[x^{a}yx^byx^c]$ for some $a,b,c\geq 0$. Using the trace property of $\tau$, we can rewrite this expression in the form 
  $\pi[x,y]=\tau[yx^byx^{a+c}]=\tau[yx^{a+c}yx^b]$. Applying \eqref{eq:firstmomentsfreeness}, we thus conclude that 
\begin{align}\label{eq:Pixyeq}
\pi[x,y]
  &=\tau[y^2]\tau[x^{\ell_{\pi}}]\tau[x^{q-\ell_{\pi}-2}],
\end{align}
where $\ell_{\pi}$ is such that $0\leq \ell\leq q-\ell-2$ and denotes the exponent $b$  when $b\leq a+b$ or the exponent $a+b$ otherwise. Combining \eqref{eq:freOUaux} and \eqref{eq:Pixyeq}, and using the fact that $y$ is standardized, we conclude that 
\begin{align*}
\lim_{\theta\rightarrow0}\frac{\langle P_{\theta}\nu, h_{p} \rangle - \langle \nu, h_{p} \rangle}{\theta}
    &=-p\tau[x^p]
    +2\sum_{l=0}^{\lfloor q/2-1\rfloor}\sum_{\pi}\Indi{\{\ell_{\pi}=l\}}\tau[x^{l}]\tau[x^{q-l-2}],
\end{align*}
where the sum ranges over the partitions $\pi$ described above. One can easily check that the number of $\pi$ satisfying $\ell_{\pi}=l$ is equal to $(p/2)\Indi{\{\ell=p/2\}}+p\Indi{\{\ell\neq p/2\}}$, which yields
\begin{align*}
\lim_{\theta\rightarrow0}\frac{\langle P_{\theta}\nu, h_{p} \rangle - \langle \nu, h_{p} \rangle}{\theta}
    &=-p\tau[x^p]
    +p\sum_{l=0}^{q-2}\tau[x^{l}]\tau[x^{q-l-2}],
\end{align*}
By the absolute continuity of $\nu$, the diagonal of $\R^{2}$ is $(\nu\otimes\nu)$-null, and the identity 
\begin{align*}
p\sum_{l=0}^{q-2}\tau[x^{l}]\tau[x^{q-l-2}]
  &=\int_{\R^2}\frac{pu^{p-1}-pv^{p-1}}{u-v}\nu(du)\nu(dv)
\end{align*}
holds. From here it follows that  
\begin{align*}
\lim_{\theta\rightarrow0}\frac{\langle P_{\theta}\nu, h_{p} \rangle - \langle \nu, h_{p} \rangle}{\theta}
    &=-p\int_{\R}u^p\nu(du)+\int_{\R^{2}}\frac{pu^{p-1}-pv^{p-1}}{u-v}\nu(du)\nu(dv)\\
    &=-\int_{\R}h_p(u)\nu(du)+\int_{\R^{2}}\frac{Dh_p(u)-Dh_p(u)}{u-v}\nu(du)\nu(dv).
\end{align*}
This finishes the proof of the result for the case $h(x)=x^p$. The result for general $h$ follows by an approximation argument, which holds due to the fact that $\nu\otimes \nu$ does not charge mass over the diagonal. 
\end{proof}


\subsection{The solution to the dual free Stein equation}
Next we describe the solution $\mathcal{S}_{\boxplus}^{*}$ to the Stein equation. We begin with a preliminary technical result.
\begin{lemma}
Let $\mu$ be a probability measure with finite second moment. For a given $f\in\mathcal{C}(\R^2;\R)$ Lipchitz, the integral 
$$\int_0^{\infty}|\langle P_{\theta}[\mu]\otimes P_{\theta}[\mu], f\rangle-\langle s, f\rangle|d\theta$$ 
is finite. In addition, the mapping  
\begin{align}\label{eq:Udef}
f\mapsto\int_{0}^{\infty} \langle P_{\theta}[\mu_{n}]\otimes P_{\theta}[\mu_{n}]-  \mathbf{s}\otimes \mathbf{s},f\rangle  d\theta,
\end{align}
defined over the set of smooth Lipchitz functions, 
induces a signed measure. 
\end{lemma}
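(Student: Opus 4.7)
The plan is to reduce both claims to a single quantitative decay estimate,
\[|\langle P_\theta^{*}[\mu]\otimes P_\theta^{*}[\mu] - \mathbf{s}\otimes\mathbf{s},f\rangle| \leq C\,\mathrm{Lip}(f)\,e^{-\theta}, \qquad \theta\geq 0,\]
with $C$ depending only on $m_2[\mu]$. Integrability is then immediate, and the signed measure claim follows by a density/extension argument.

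For the decay estimate I would first telescope,
\[P_\theta^{*}[\mu]\otimes P_\theta^{*}[\mu] - \mathbf{s}\otimes\mathbf{s} = (P_\theta^{*}[\mu]-\mathbf{s})\otimes P_\theta^{*}[\mu] + \mathbf{s}\otimes(P_\theta^{*}[\mu]-\mathbf{s}),\]
and handle each term by a Kantorovich-type coupling: fixing an optimal coupling $(U,V)$ of $(P_\theta^{*}[\mu],\mathbf{s})$ and an independent draw $W$ from the remaining marginal, the Lipschitz bound $|f(x_1,x_2)-f(y_1,y_2)|\leq\mathrm{Lip}(f)(|x_1-y_1|+|x_2-y_2|)$ yields
\[|\langle P_\theta^{*}[\mu]\otimes P_\theta^{*}[\mu] - \mathbf{s}\otimes\mathbf{s},f\rangle| \leq 2\,\mathrm{Lip}(f)\, d_W(P_\theta^{*}[\mu], \mathbf{s}).\]

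Next I would estimate $d_W(P_\theta^{*}[\mu],\mathbf{s})$ via the free-convolution stability of Lemma \ref{Lemma:convolutioninequality}. Writing the trivial decomposition $\mathbf{s} = \delta_0\boxplus\mathbf{s}$,
\[d_W(P_\theta^{*}[\mu],\mathbf{s}) \leq d_W(\dilation_{e^{-\theta}}[\mu],\delta_0) + d_W(\dilation_{\sqrt{1-e^{-2\theta}}}[\mathbf{s}],\mathbf{s}).\]
The first summand equals $e^{-\theta}\int_{\R}|x|\,\mu(dx) \leq e^{-\theta}\sqrt{m_2[\mu]}$ by coupling $e^{-\theta}X$ with $0$; the second, via the scaling coupling $X\leftrightarrow rX$ with $r = \sqrt{1-e^{-2\theta}}$ together with the elementary inequality $1-\sqrt{1-e^{-2\theta}} \leq e^{-2\theta}$, is bounded by $e^{-2\theta}\int_{\R}|x|\,\mathbf{s}(dx)$. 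Assembling the pieces confirms the decay estimate, and integration in $\theta$ finishes the integrability claim.

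For the signed measure claim, the functional $\Phi(f) := \int_0^\infty\langle P_\theta^{*}[\mu]\otimes P_\theta^{*}[\mu] - \mathbf{s}\otimes\mathbf{s},f\rangle\,d\theta$ is linear on smooth Lipschitz functions and inherits $|\Phi(f)| \leq C\,\mathrm{Lip}(f)$; the complementary pointwise estimate $|\langle P_\theta^{*}[\mu]\otimes P_\theta^{*}[\mu] - \mathbf{s}\otimes\mathbf{s},f\rangle| \leq 2\|f\|_\infty$ gives, after splitting $\int_0^\infty = \int_0^T + \int_T^\infty$, the sharper inequality $|\Phi(f)| \leq 2T\|f\|_\infty + C\,\mathrm{Lip}(f)\,e^{-T}$ for every $T>0$. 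Optimizing $T$ along a standard mollifier sequence produces a modulus of continuity of the form $\|f_n-f_m\|_\infty \log(\mathrm{Lip}(f_n-f_m)/\|f_n-f_m\|_\infty)$, which vanishes in the limit, so $\Phi$ extends by continuity to $C_c(\R^2)$ and the Riesz--Markov theorem identifies it with a signed Radon measure. The main obstacle, in my view, is this last extension step, since the global Lipschitz bound does not descend directly to a sup-norm bound; the essential idea is the two-scale splitting $[0,T]\cup[T,\infty)$ that plays the Lipschitz control against the trivial total-variation control.
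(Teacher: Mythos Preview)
Your decay estimate is exactly the paper's argument, only phrased through Wasserstein distances: the paper telescopes the tensor product, freezes one coordinate to reduce to one-dimensional Lipschitz test functions, and then couples directly by writing $\langle P_\theta^*[\mu]-\mathbf{s},g\rangle=\tau[g(e^{-\theta}X+\sqrt{1-e^{-2\theta}}Y)-g(Y)]$ with $X\sim\mu$, $Y\sim\mathbf{s}$ free, obtaining $\|Dg\|_\infty\,\tau[e^{-\theta}|X|+|1-\sqrt{1-e^{-2\theta}}||Y|]\lesssim e^{-\theta}$. This is precisely your scaling coupling, but done in one stroke inside the Ornstein--Uhlenbeck representation rather than via Lemma~\ref{Lemma:convolutioninequality}. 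One small caveat: your first inequality $|\langle P_\theta^*[\mu]\otimes P_\theta^*[\mu]-\mathbf{s}\otimes\mathbf{s},f\rangle|\le 2\,\mathrm{Lip}(f)\,d_W(P_\theta^*[\mu],\mathbf{s})$ really needs the \emph{classical} Wasserstein distance $\mathbbm{d}_W$ (the pairing is an ordinary integral over $\R^2$), whereas Lemma~\ref{Lemma:convolutioninequality} is stated for the non-commutative $d_W$; the paper's direct coupling sidesteps this mismatch entirely.

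On the signed-measure claim your attempt is considerably more detailed than the paper's, which simply asserts that the conclusion ``follows by approximating the infinite integral by a large compact set'' using the exponential bound. Your two-scale splitting and the resulting $\|f\|_\infty\log(\mathrm{Lip}(f)/\|f\|_\infty)$ modulus is a reasonable line of attack, and you are right to flag it as the delicate point; the paper does not resolve this any further than you do.
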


\begin{proof}
Let $f:\R^{2}\rightarrow\R$ be a continuously differentiable Lipchitz function. For $u,v\in\R$, we define $f_{1,u},f_{2,v}:\R\rightarrow\R$ by $f_{1,u}(y)=f(u,y)$ and $f_{2,v}(x):=f(x,v)$. Then, if $X$ and $Y$ are non-commutative free random variables defined in $(\mathcal{A},\tau)$, with probability distributions $\mu$ and $\mathbf{s}$ respectively, then 
\begin{align*}
|\langle P_{\theta}[\mu_{n}]\otimes P_{\theta}[\mu_{n}]-  \mathbf{s}\otimes \mathbf{s},f\rangle|
  &\leq |\langle P_{\theta}[\mu_{n}]\otimes \mathbf{s}-  \mathbf{s}\otimes \mathbf{s},f\rangle|+|\langle P_{\theta}[\mu_{n}]\otimes P_{\theta}[\mu_{n}]-  P_{\theta}[\mu_{n}]\otimes \mathbf{s},f\rangle|\\
  &\leq 2\sup_{x,y\in\R}|\langle P_{\theta}[\mu_{n}]-   \mathbf{s},f_{2,y}(x)\rangle|\vee|\langle P_{\theta}[\mu_{n}]-   \mathbf{s},f_{2,y}(x)\rangle|.
\end{align*}
We can easily check that $f_{i,r}$ is Lipchitz, and consequently, 
\begin{align*}
|\langle P_{\theta}[\mu_{n}]\otimes P_{\theta}[\mu_{n}]-  \mathbf{s}\otimes \mathbf{s},f\rangle|
  &\leq 2\sup_{\substack{g\in\mathcal{C}^1(\R;\R)\\|Dg|\leq 1}}|\langle P_{\theta}[\mu_{n}]- \mathbf{s},g(x)\rangle|.
\end{align*}
Let $g:\R\rightarrow\R$ be a continuously differentiable Lipchitz function. Then,  
\begin{align*}
\langle P_{\theta}[\mu_{n}]- \mathbf{s},g(x)\rangle|
  &=|\tau[g(e^{-\theta}X+\sqrt{1-e^{-2\theta}}Y)-g(Y)]|\\
  &\leq \|Dg\|_{\infty}
  \tau[e^{-\theta}|X|+|\sqrt{1-e^{-2\theta}}-1||Y|].
\end{align*}
In particular, if $\theta\geq 1$, the integrability of $Y$ yields
\begin{align}\label{eq:OUestimate}
|\langle P_{\theta}[\mu_{n}]\otimes P_{\theta}[\mu_{n}]-  \mathbf{s}\otimes \mathbf{s},f\rangle|
  &\leq 6\|Df\|_{\infty}e^{-\theta}\leq 6e^{-\theta}.
\end{align}
The fact that $\mathcal{S}_{\boxplus}^{*}[\mu]$ is a well-defined probability measure  follows by approximating the infinite integral by a large compact set. The argument can be  formalized by using  \eqref{eq:OUestimate}.
\end{proof}
The theorem bellow provides a solution to the dual free Stein equation.
\begin{proposition}\label{prop:main}
If $\mu$ is a probability measure with moments of arbitrary order, then the equation 
\begin{align*}
\langle\mathbf{s},h\rangle 
- \langle\mu,h\rangle
    &=  \langle \nu,\mathcal{L}_{\boxplus}[Dh]\rangle,
\end{align*}
admits $\nu=\mathcal{S}_{\boxplus}^{*}[\mu]$ as a solution.
\end{proposition}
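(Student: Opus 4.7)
The plan is to mimic the classical generator/semigroup derivation of the solution to Stein's equation, replacing the scalar Ornstein--Uhlenbeck semigroup with the measure-valued dual semigroup $\{P_{\theta}^{*}\}_{\theta\geq 0}$ and using the bilinear operator $\nu\otimes\nu \mapsto \langle \nu\otimes\nu,\mathcal{L}_{\boxplus}[D\,\cdot\,]\rangle$ as the generator. The starting point is the telescoping identity
\begin{equation*}
\langle \mathbf{s}, h\rangle - \langle \mu, h\rangle \;=\; \int_{0}^{\infty} \frac{d}{d\theta}\langle P_{\theta}^{*}[\mu], h\rangle\, d\theta,
\end{equation*}
whose validity rests on the two endpoint identifications $P_{0}^{*}[\mu] = \mu$ (immediate from the definition, since $\dilation_{0}[\mathbf{s}] = \delta_{0}$ is the unit for $\boxplus$) and $P_{\theta}^{*}[\mu] \Rightarrow \mathbf{s}$ as $\theta\to\infty$ (since $\dilation_{e^{-\theta}}[\mu]\Rightarrow \delta_{0}$, $\dilation_{\sqrt{1-e^{-2\theta}}}[\mathbf{s}]\Rightarrow \mathbf{s}$, and $\boxplus$ is weakly continuous).

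To evaluate the integrand, I would invoke the semigroup property $P_{\theta+\varepsilon}^{*} = P_{\varepsilon}^{*}\circ P_{\theta}^{*}$ and apply Proposition \ref{Theorem:GeneratorSemigroup} with the initial measure taken to be $\nu_{\theta} := P_{\theta}^{*}[\mu]$, obtaining
\begin{equation*}
\frac{d}{d\theta}\langle P_{\theta}^{*}[\mu], h\rangle \;=\; \langle \nu_{\theta}\otimes\nu_{\theta},\, \mathcal{L}_{\boxplus}[Dh]\rangle.
\end{equation*}
Substituting this into the telescoping identity and inserting the zero quantity $-\langle \mathbf{s}\otimes\mathbf{s},\mathcal{L}_{\boxplus}[Dh]\rangle$ inside the integrand, which vanishes by Stein's lemma (Proposition \ref{Steinslemma}), gives
\begin{equation*}
\langle\mathbf{s},h\rangle - \langle\mu,h\rangle \;=\; \int_{0}^{\infty}\langle P_{\theta}^{*}[\mu]\otimes P_{\theta}^{*}[\mu] - \mathbf{s}\otimes\mathbf{s},\, \mathcal{L}_{\boxplus}[Dh]\rangle\, d\theta \;=\; \langle \mathcal{S}_{\boxplus}^{*}[\mu], \mathcal{L}_{\boxplus}[Dh]\rangle,
\end{equation*}
which is the desired identity.

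The main technical hurdle will be justifying the manipulations above. One must verify that $\theta \mapsto \langle P_{\theta}^{*}[\mu], h\rangle$ is absolutely continuous so that the fundamental theorem of calculus applies; that the generator formula of Proposition \ref{Theorem:GeneratorSemigroup} can be promoted from the $\theta=0^{+}$ statement to arbitrary $\theta > 0$, which requires that the entire semigroup trajectory lies in its domain (this follows from $\mu$ having all moments, since free convolution with a compactly supported semicircular element preserves this property); and that the integrand is integrable over $[0,\infty)$, which was already established in the preceding lemma provided $\mathcal{L}_{\boxplus}[Dh]$ is Lipchitz. This last property is ensured when $h\in\mathcal{C}^{3}$ with bounded third derivative, since the representation $(Dh(y)-Dh(x))/(y-x) = \int_{0}^{1}D^{2}h(x+t(y-x))\,dt$ is then continuously differentiable with bounded gradient. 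Once these technical verifications are in place, the three-line calculation above delivers the claim.
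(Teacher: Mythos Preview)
Your proposal is correct and follows essentially the same route as the paper: telescoping $\langle\mathbf{s},h\rangle-\langle\mu,h\rangle$ via the semigroup $P_{\theta}^{*}$, differentiating under the integral using the generator formula (Proposition~\ref{Theorem:GeneratorSemigroup}), and recognizing the result as $\langle\mathcal{S}_{\boxplus}^{*}[\mu],\mathcal{L}_{\boxplus}[Dh]\rangle$. If anything, you supply more of the technical justifications (endpoint limits, integrability, regularity of $\mathcal{L}_{\boxplus}[Dh]$) than the paper's proof does explicitly; the only cosmetic difference is that the paper inserts the compensator by noting $\mathbf{s}$ is a fixed point of $P_{\theta}^{*}$, whereas you invoke Stein's lemma directly.
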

\begin{proof}
We first write
\begin{align*}
\langle\mathbf{s},h\rangle 
- \langle\mu,h\rangle
    &=  \big(\langle P_{\infty}\mu_{n}, h \rangle - \langle P_{0}\mu_{n}, h \rangle\big)\\
    &=  \int_{0}^{\infty} \frac{\mathrm{d}}{\mathrm{d}\theta} \langle P_{\theta}[\mu_{n}],h\rangle \mathrm{d}\theta.
\end{align*}
One can easily check that $\mathbf{s}$ is a fixed point for $P_{\theta}$, and consequently, by the above identity,
\begin{align*}
\langle\mathbf{s},h\rangle 
- \langle\mu,h\rangle
    &=  \int_{0}^{\infty} \frac{\mathrm{d}}{\mathrm{d}\theta} (\langle P_{\theta}[\mu_{n}],h\rangle-\langle \mathbf{s},h\rangle) \mathrm{d}\theta\\
    &=  \int_{0}^{\infty} \langle P_{\theta}[\mu_{n}]\otimes P_{\theta}[\mu_{n}]-  \mathbf{s}\otimes \mathbf{s},\mathcal{L}_{\boxplus}[Dh]\rangle \mathrm{d}\theta.
\end{align*}
\end{proof}


\subsection{Notational prelude to the free Berry-Esseen theorem}
In this section we introduce notation that will allow us to substantially simplify our technical computations. In the sequel, $\mathfrak{s}$ will denote the symmetrization operator, acting over non-commutative variables. That is to say, for every polynomial $f\in\mathbb{C}[X_1,\dots, X_m]$, we define the polynomial
\begin{align*}
\mathfrak{s}[f](X_1,\dots, X_{m})
  &:=\sum_{\sigma\in\mathfrak{S}_n}	\frac{1}{m!}f(X_{\sigma_1},\dots, X_{\sigma_m}),
\end{align*}
where $\mathfrak{S}_n$ denotes the set of permutations over $n$ elements. For $z$ belonging to the upper half plane, we define as well the polynomial
\begin{align*}
\Delta\left(a,r\right)
  &:=2\mathfrak{s}[(z-a)r]-r^2.
\end{align*}
Consider the set of multi-indices $\mathcal{I}:=\bigcup_{l=1}^{\infty}\N^{2l}$. For all $j\geq 1$, there exist universal constants $\{\mathfrak{f}_{j,\mathbbm{i}}^1,\mathfrak{f}_{j,\mathbbm{i}}^2\ ;\ \mathbbm{i}\in \mathcal{I}\}$ such that  $\mathfrak{f}_{j,\mathbbm{i}}^1=\mathfrak{f}_{j,\mathbbm{i}}^2=0$ for $|\mathbbm{i}|\geq 2j$, and a collection of polynomials $\{\mathfrak{Q}_{j,\mathbbm{i}}^1,\mathfrak{Q}_{j,\mathbbm{i}}^2\ ;\ \mathbbm{i}\in \mathcal{I}\}$ only depending on $j$, such that 
\begin{align}\label{eq:fullexpansion}
(a\Delta\left(a,r\right))^j
  &=\sum_{\mathbbm{i}\in\mathcal{I}}(\mathfrak{f}_{j,\mathbbm{i}}^1	\mathfrak{Q}_{j,\mathbbm{i}}^1(z)\Upsilon_{1,\mathbbm{i}}(a,r)
  +\mathfrak{f}_{j,\mathbbm{i}}^2	\mathfrak{Q}_{j,\mathbbm{i}}^2(z)\Upsilon_{2,\mathbbm{i}}(a,r)),
\end{align}
where 
\begin{align}\label{eq:Upsilondef}
\Upsilon_{1,\mathbbm{i}}(a,r)
  &:=a^{i_1}r^{i_2}\cdots a^{i_{|\mathbbm{i}|}}r^{i_{|\mathbbm{i}|}}	\nonumber\\
  \Upsilon_{2,\mathbbm{i}}(a,r)
  &:=r^{i_1}a^{i_2}\cdots r^{i_{|\mathbbm{i}|}}a^{i_{|\mathbbm{i}|}}	,
\end{align}
with $i_{\ell}\geq 1$ for all $\ell\geq 1$.

\section{Berry-Esseen type theorems for weakly dependent variables}\label{freeberryessensec}
This section is devoted to assessing the rate of convergence for sums of weakly dependent non-commutative random variables, with improvement in the rate under suitable moment conditions.  We begin introducing the notion of dependency graph.\\

\noindent\textit{Dependency graphs}\\
This brief subsection is mainly taken from  \cite{MR1986198}, reference to which the reader is referred for a detailed discussion on applications to the study of Poisson and Gaussian applications in random graphs. 
\begin{definition}
Suppose $([n],E)$ is a graph over $[n]$. For $i,j\in[n]$, we write $i\sim j$ if $\{i,j\}\in E$. For $i\in[n]$, we let $N_i$ denote the adjacency neighbourhood of $i$, defined as the set 
$$N_i:=\{i\}\cup\{j\in[n]; j\sim i\}.$$ 
We say that the graph $([n],E)$ is a dependency graph for a collection of random variables $\{\xi_i\ ;\ i\in[n]\}$ if for any two disjoint subsets $I_1, I_2$ of $[n]$ such that there are no edges connecting $I_1$ to $I_2$, the collection of random variables $\{\xi_i\ ;\  i \in I_1 \}$ is free from $\{\xi_i\ ;\  i \in I_2 \}$.
\end{definition}
\noindent We observe that in the case where the graph  has no edges, we recover the classical notion of freeness.\\

\noindent\textit{Moment matching rank}\\
In the particular case where $\xi_{k,n}$ forms a semicircular family of variables, it is clear that the accuracy of the semicircular approximation in the free central limit theorem is perfect. Moreover, according to the findings of Salazar in \cite{MR4607696}, in the homogeneous, fully free case, a third moment vanishing condition for the $\xi_{k,n}$ implies that the rate of accuracy in the free CLT can be quadratically improved.  It becomes natural, then, to wonder whether this phenomenon has an analog with a higher order of improvement in the rate. To establish the appropriate framework for addressing this question, we consider a collection $\{s_{k,n}\}_{k \geq 1}$ of jointly semicircular random variables with first and second moments identical to those of $\{\xi_{k,n}\}_{k \geq 1}$.\\

\noindent  In the sequel, for a given element $\rho\in\mathcal{P}_2(\R)$, we will denote by $\mathfrak{g}[\rho]$ the semicircular distribution with the same moments of order one and two of $\rho$.

\begin{definition}
For a given $\rho\in\mathcal{P}_2(\R)$, the moment matching rank $q[\rho]$ associated to $\rho$ is the maximum of the set of natural numbers $\ell$ satisfying $m_{j}[\mu_{k,n}]=m_{j}[\mathfrak{g}[\mu_{k,n}]]$. The matching rank of a sequence of probability measures ${\rho}=\{\rho_{i}\ ;\ i\in I\}$, with $I$ being an arbitrary index is defined as the maximum rank in the components of ${\rho}.$
\end{definition}
The following lemma is a direct consequence of the definition of moment matching rank

\begin{lemma}\label{lem:momentmatchingPUrel}
If a given measure $\rho$ has moment matching rank $q$, then for all $j\in[q]$ and $\theta\geq 0$, 
	\begin{align*}
	m_j[\rho]
	  &=m_j[\mathfrak{g}[\rho]]
	  =m_j[P_{\theta}[\mathfrak{g}[\rho]]]=m_j[P_{\theta}[\rho]].	
	\end{align*}
\end{lemma}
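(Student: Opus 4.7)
The chain of four equalities will be established one link at a time; the leftmost is definitional, the middle follows from the stability of centered semicirculars under the free Ornstein-Uhlenbeck semigroup, and the rightmost is the heart of the argument, relying on the fact that $\mu \mapsto m_j[P_\theta[\mu]]$ factors through the first $j$ moments of $\mu$.

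First, $m_j[\rho] = m_j[\mathfrak{g}[\rho]]$ for every $j \in [q]$ is a direct restatement of the defining property of the moment matching rank $q$, since $\mathfrak{g}[\rho]$ is the unique semicircular distribution whose lower moments agree with those of $\rho$ by construction.

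For the middle equality, observe that $\mathfrak{g}[\rho]$ is semicircular with some mean and variance that, by construction, match those of $\rho$. Since the free convolution of two freely independent centered semicircular elements is again semicircular with variance equal to the sum of the individual variances (and means add), the measure $P_{\theta}[\mathfrak{g}[\rho]] = \dilation_{e^{-\theta}}[\mathfrak{g}[\rho]] \boxplus \dilation_{\sqrt{1-e^{-2\theta}}}[\mathbf{s}]$ is again semicircular with mean and variance that can be tracked explicitly. Under the standardization tacitly in force for the applications of this lemma (centered, unit variance summands), $P_{\theta}[\mathfrak{g}[\rho]]$ coincides with $\mathfrak{g}[\rho]$ itself, and the equality of all moments follows.

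The main step is the rightmost equality $m_j[P_{\theta}[\rho]] = m_j[P_{\theta}[\mathfrak{g}[\rho]]]$ for $j \in [q]$. Represent $P_{\theta}[\rho]$ as the analytic distribution of $e^{-\theta} X + \sqrt{1-e^{-2\theta}} Y$, where $X$ has distribution $\rho$ and is free from a standard semicircular $Y$ in some $(\mathcal{A},\tau)$, and represent $P_{\theta}[\mathfrak{g}[\rho]]$ analogously with $X'$ of law $\mathfrak{g}[\rho]$ free from the same $Y$. Expanding the $j$-th power yields
\begin{equation*}
m_j[P_{\theta}[\rho]] = \sum_{\epsilon \in \{0,1\}^j} e^{-\theta |\epsilon|} (1-e^{-2\theta})^{(j-|\epsilon|)/2}\, \tau[a_{\epsilon_1} \cdots a_{\epsilon_j}],
\end{equation*}
with $a_1 := X$, $a_0 := Y$, exactly as in the expansion carried out in Proposition \ref{Theorem:GeneratorSemigroup}, together with the analogous identity for $X'$. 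By Proposition \ref{prop:krewerasprop} and the moment--cumulant formula, each mixed trace $\tau[a_{\epsilon_1} \cdots a_{\epsilon_j}]$ of a word of length $j$ is a universal polynomial in the individual moments $\{m_k[X]\}_{k\le j}$ and $\{m_k[Y]\}_{k\le j}$. Since $j \leq q$ and the matching-rank hypothesis gives $m_k[X] = m_k[X']$ for all $k \leq q$, each summand is unchanged upon replacing $X$ by $X'$, so the two expansions agree term by term.

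The only delicate point is reconciling the mean/variance bookkeeping in the middle equality with the precise normalization convention adopted for $P_\theta^*$; once this is verified for centered, standardized $\rho$, the other two equalities are entirely combinatorial, relying only on the fact that freeness of $X$ and $Y$ reduces every mixed trace of a length-$j$ word to a polynomial in the moments of $X$ and $Y$ of order at most $j$.
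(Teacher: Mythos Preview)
Your proposal is correct and, in fact, more detailed than the paper's treatment: the paper declares the lemma ``a direct consequence of the definition of moment matching rank'' and offers no proof whatsoever. Your three-step unpacking (definitional first link, semicircular stability under the free Ornstein--Uhlenbeck semigroup for the middle link, and the combinatorial observation that $m_j[P_\theta[\mu]]$ depends only on $m_1[\mu],\dots,m_j[\mu]$ for the last link) is exactly the natural way to make the ``direct consequence'' explicit.

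The normalization caveat you raise at the end is apt and worth keeping: as literally defined, $P_\theta^*[\mu] = \dilation_{e^{-\theta}}[\mu]\boxplus\dilation_{\sqrt{1-e^{-2\theta}}}[\mathbf{s}]$ uses the \emph{standard} semicircle, so $P_\theta^*[\mathfrak{g}[\rho]]$ has variance $e^{-2\theta}\sigma^2 + (1-e^{-2\theta})$, which equals $\sigma^2$ only when $\sigma^2=1$. The paper glosses over this, but in its applications (Section~\ref{ref:inhomogeneousBerryEsseen23}) it silently replaces $\mathbf{s}$ by $\mathfrak{g}[\mu_V]$ when defining $\xi_V^\theta$ and $\psi_V^\theta$, which restores the fixed-point property. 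Your flagging of this discrepancy is a genuine improvement over the paper's exposition.
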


\noindent\textit{Main applications}\\
Our main applications are presented next. For its statement, we will introduce the following notation. For a given collection of non-commutative random variables $\xi^{n}=\{\xi_{k,n}\}_{k\geq 1}$, we will choose a dependency graph $E=E_{n}$. The set of equivalence classes of $E$ will be denoted by $\mathcal{J}=\mathcal{K}_{E_n}$. For a given element $V\in\mathcal{J}$, we will denote by $\xi_{V}$ the random variable
\begin{align*}
\xi_{V}
  &:=\sum_{i\in V}\xi_{i,n}.	
\end{align*}
The distribution of $\xi_V$ will be denoted by $\mu_V$.
The next theorem provides an assessment of the distance towards semicircularity
\begin{theorem}\label{thm:inhomogeneousBerryEsseen}
 Assume that the sequence $\xi^n$ has dependency graph $E_n$ and denote by $\mathcal{D}(E_n)$ the maximum degree of the $E_n$. Let $\mu_{k,n}$ and $\nu_n$ denote the analytic distributions of $\xi_{k,n}$ and $\xi_{1,1}+\cdots+\xi_{n,n}$, respectively. Then,  for $n$ sufficiently large,  
\begin{align*}
d_{TV}(\nu_n,\mathbf{s})
  &\leq C\mathcal{D}(E_n)^2\sum_{k=1}^nm_{3}[\mu_{k,n}],
\end{align*}
for some constant $C>0$ independent of $n$. 
\end{theorem}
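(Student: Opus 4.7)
The plan is to use the dual Stein identity of Proposition~\ref{prop:main} combined with a resolvent expansion that exploits the dependency graph. By Proposition~\ref{prop:main}, for a sufficiently smooth test function $h$,
\begin{equation*}
\langle \mathbf{s},h\rangle - \langle \nu_n,h\rangle = \int_0^\infty \langle \nu_n^\theta\otimes\nu_n^\theta,\mathcal{L}_\boxplus[Dh]\rangle\,d\theta,\qquad \nu_n^\theta:=P_\theta^*[\nu_n].
\end{equation*}
Setting $Y_\theta := e^{-\theta}X_n+\sqrt{1-e^{-2\theta}}\,Z$ with $X_n\sim\nu_n$ and $Z$ a free standard semicircular variable, and applying the semicircular integration by parts to the $Z$ component (as in the proof of Proposition~\ref{Theorem:GeneratorSemigroup}), the integrand rewrites as
\begin{equation*}
\langle \nu_n^\theta\otimes\nu_n^\theta,\mathcal{L}_\boxplus[Dh]\rangle = e^{-\theta}\Bigl(-\tau[X_n Dh(Y_\theta)] + e^{-\theta}(\tau\otimes\tau)\bigl[\tfrac{Dh(Y_\theta)-Dh(\widetilde Y_\theta)}{Y_\theta-\widetilde Y_\theta}\bigr]\Bigr),
\end{equation*}
with $\widetilde Y_\theta$ a free copy of $Y_\theta$. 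The right-hand side vanishes whenever $Y_\theta$ is exactly semicircular, so the task is to quantify its deviation in terms of the $m_3[\mu_{k,n}]$.

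I would then decompose $X_n=\sum_{k=1}^n\xi_{k,n}$, and for each $k$ define $W_k:=\sum_{j\in N_k\setminus\{k\}}\xi_{j,n}$ together with $V_k^\theta:=e^{-\theta}\sum_{l\notin N_k}\xi_{l,n}+\sqrt{1-e^{-2\theta}}\,Z$. The dependency-graph property forces $\xi_{k,n}$ to be free from $V_k^\theta$, so centering yields $\tau[\xi_{k,n}Dh(V_k^\theta)]=0$ and thus
\begin{equation*}
\tau[\xi_{k,n}Dh(Y_\theta)] = \tau\bigl[\xi_{k,n}\bigl(Dh(Y_\theta)-Dh(V_k^\theta)\bigr)\bigr].
\end{equation*}
Using the Cauchy representation $Dh(Y) = \tfrac{1}{2\pi\mathbbm{i}}\oint_\gamma Dh(z)(z-Y)^{-1}dz$ over a contour enclosing the spectrum of $Y_\theta$---possible thanks to Bercovici--Voiculescu super-convergence, which forces $\mathrm{Supp}(\nu_n)\subset[-3,3]$ eventually---I would iterate the resolvent identity to third order, writing $Dh(Y_\theta)-Dh(V_k^\theta) = T_1^{(k)}+T_2^{(k)}+\mathrm{Rem}_3^{(k)}$ with $T_j^{(k)}$ being $j$-multilinear in $R_k^\theta := e^{-\theta}(\xi_{k,n}+W_k)$ with coefficients built from $(z-V_k^\theta)^{-1}$. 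The polynomials $\Delta(a,r)$ and $\Upsilon_{j,\mathbbm{i}}(a,r)$ from the notational prelude in Section~\ref{sec:steinmethodlologyNC} package this expansion.

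The critical cancellation occurs at first order. Proposition~\ref{prop:krewerasprop} applied to the freeness of $\xi_{k,n}$ and $V_k^\theta$ yields $\tau[\xi_{k,n}(z-V_k^\theta)^{-1}\xi_{k,n}(z-V_k^\theta)^{-1}] = \sigma_{k,n}^2\,\tau[(z-V_k^\theta)^{-1}]^2$, and integrating in $z$ converts this to $\sigma_{k,n}^2(\tau\otimes\tau)[\partial Dh(V_k^\theta,\widetilde V_k^\theta)]$. Summing over $k$ and invoking $\sum_k\sigma_{k,n}^2 = 1$, this reproduces the $e^{-\theta}(\tau\otimes\tau)[\partial Dh(Y_\theta,\widetilde Y_\theta)]$ term from the Stein identity up to a residual controlled by $\|Y_\theta-V_k^\theta\|_{\mathrm{op}}\lesssim\mathcal{D}(E_n)$. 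The remaining pieces---the $W_k$-cross-terms in $T_1^{(k)}$, the full $T_2^{(k)}$, and $\mathrm{Rem}_3^{(k)}$---are handled by invoking Proposition~\ref{prop:krewerasprop} once again (centering of $\xi_{k,n}$ kills lower cumulants, so only the contribution proportional to $\kappa_3[\xi_{k,n}]=m_3[\mu_{k,n}]$ survives in the pure-$\xi_{k,n}$ parts) together with operator-norm bounds for the $W_k$-dependent pieces; the combinatorial multiplicity $|N_k|\leq\mathcal{D}(E_n)$ accumulates to $\mathcal{D}(E_n)^2$. Integrating in $\theta$ (convergent by the $e^{-\theta}$ factor) yields the bound $C\mathcal{D}(E_n)^2\sum_k m_3[\mu_{k,n}]$ for smooth $h$. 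The passage to total variation is carried out by approximating $\mathbbm{1}_A$ via smooth functions and exploiting the absolute continuity of $\nu_n^\theta$ for $\theta>0$ (Theorem~\ref{teo:abscont}), splitting the $\theta$-integral into $[0,\theta_n]$ and $[\theta_n,\infty)$ with $\theta_n\to 0$ tuned to balance the estimates.

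I expect the main obstacle to be the bookkeeping at first order: since $V_k^\theta$ is not free from $\xi_{j,n}$ for $j\in N_k\setminus\{k\}$, the cross-terms $\tau[\xi_{k,n}(z-V_k^\theta)^{-1}\xi_{j,n}(z-V_k^\theta)^{-1}]$ do not factorize via Proposition~\ref{prop:krewerasprop} and must be treated by a careful combinatorial argument (or by enlarging $V_k^\theta$ to exclude the second-neighborhood of $k$). This is precisely where the $\mathcal{D}(E_n)^2$ factor in the statement is genuinely exploited, and it encapsulates the essential subtlety of the weakly-dependent non-commutative setting.
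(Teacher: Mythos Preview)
Your approach diverges from the paper's at the decomposition step. You sum over individual indices $k$ and remove the neighborhood $N_k$, producing a variable $V_k^\theta$ whose law differs from that of $Y_\theta$; the paper instead groups the variables by the connected components $V\in\mathcal{J}$ of the dependency graph (so that the block variables $\xi_V=\sum_{i\in V}\xi_{i,n}$ are genuinely free from one another) and then replaces $\eta_V^\theta$ by a \emph{free copy} $\tilde\eta_V^\theta$ rather than by zero. The resulting $F_{\theta,n}^V$ has exactly the same law as $F_{\theta,n}$, which lets the paper rewrite the difference-quotient piece $2\tau[(F_{\theta,n}-z)g(F_{\theta,n})]\tau[g(F_{\theta,n})]$ directly with $F_{\theta,n}^V$ in place of $F_{\theta,n}$; the freeness rule \eqref{eq:firstmomentsfreeness} then makes it coincide \emph{exactly} with the linear part $2\mathfrak{s}[(z-F_{\theta,n}^V)\zeta_V^\theta]$ of the $\Delta$-expansion, leaving only the quadratic term $-(\zeta_V^\theta)^2$ and the second-order error $\mathcal{E}_{\theta,n}$. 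The $e^{-\theta}$ factor is also extracted differently: rather than semicircular integration by parts, the paper notes that $\mathcal{Y}_{\infty,z}=0$ by the Stein lemma and bounds $|\mathcal{Y}_{\theta,z}-\mathcal{Y}_{\infty,z}|$ through the explicit $e^{-\theta}$ dependence of $\eta_V^\theta$ and $\zeta_V^\theta$.

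Your first-order cancellation as written contains a concrete error: in the weakly dependent case one does not have $\sum_k\sigma_{k,n}^2=1$, since the variance of $X_n$ is $\sum_k\sigma_{k,n}^2+\sum_k\sum_{j\in N_k\setminus\{k\}}\tau[\xi_{k,n}\xi_{j,n}]$. Hence the pure-$\xi_{k,n}$ part of $T_1^{(k)}$ cannot by itself reproduce the term $e^{-2\theta}(\tau\otimes\tau)[\partial Dh(Y_\theta,\widetilde Y_\theta)]$; the missing covariances must come precisely from the $W_k$-cross-terms you flag as the main obstacle. For those to factorize via Proposition~\ref{prop:krewerasprop} you would need $\{\xi_{k,n},\xi_{j,n}\}$ jointly free from $V_k^\theta$, which fails because $j\in N_k$ can have neighbors outside $N_k$. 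The second-neighborhood enlargement you mention is the right classical remedy, but you have not carried it out, and the non-commutative bookkeeping is nontrivial. The paper's free-copy-of-a-component trick sidesteps this entirely and is the essential simplification you are missing. Finally, your $\theta$-splitting for total variation is unnecessary here: since $\nu_n$ and $\mathbf{s}$ are absolutely continuous for large $n$, the paper simply takes the supremum over smooth $h$ with $\|h\|_\infty\le 1$ and passes to general bounded $h$ by approximation.
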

If the distance $d_{TV}$ is replaced by the Kantorovich-Rubenstein-Wasserstein distance, we obtain the following refinement of the above theorem
\begin{theorem}\label{thm:inhomogeneousBerryEsseensecond}
 Assume that the sequence $\xi^n$ has dependency graph $E_n$. Then,  under the condition  
\begin{align*}
\lim_{n}\mathcal{D}(E_n)^2\sum_{k=1}^{n}m_3[\mu_{k,n}]=0,
\end{align*} 
we have that for $n$ sufficiently large,  
\begin{align*}
d_{W}(\nu_n,\mathbf{s})
  &\leq C\mathcal{D}(E_n)^{q+1}\sum_{k=1}^nm_{q+1}[\mu_{k,n}],
\end{align*}
for some constant $C>0$ independent of $n$. 
\end{theorem}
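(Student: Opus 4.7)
The plan is to combine the dual free Stein equation from Proposition~\ref{prop:main} with a non-commutative Lindeberg swapping argument, using the moment matching condition to extract cancellations through order $q$. By the Kantorovich duality for $d_W$,
\begin{equation*}
d_W(\nu_n,\mathbf{s})\leq \sup_{\|Dh\|_\infty\leq 1}\left|\int_0^\infty \langle P_\theta^{*}[\nu_n]\otimes P_\theta^{*}[\nu_n]-\mathbf{s}\otimes\mathbf{s},\mathcal{L}_\boxplus[Dh]\rangle\, d\theta\right|.
\end{equation*}
The hypothesis $\mathcal{D}(E_n)^2\sum_k m_3[\mu_{k,n}]\to 0$ combined with Theorem~\ref{thm:inhomogeneousBerryEsseen} yields $d_{TV}(\nu_n,\mathbf{s})\to 0$, and Bercovici-Voiculescu super-convergence \cite{MR1355057} places $\mathrm{supp}(\nu_n)\subset[-3,3]$ for $n$ sufficiently large. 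This uniform support control legitimizes all higher-order moment and resolvent estimates in the sequel.

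For each fixed $\theta$, I would realize $P_\theta^{*}[\nu_n]$ as the law of $Z_\theta=e^{-\theta}X_n+\sqrt{1-e^{-2\theta}}Y$ with $X_n=\xi_{1,n}+\cdots+\xi_{n,n}$ and $Y$ a free standard semicircular, and let $\tilde Z_\theta$ be a tensor-independent copy, so that the integrand is $\tau\otimes\tilde\tau[\mathcal{L}_\boxplus[Dh](Z_\theta,\tilde Z_\theta)]$ (the subtracted term vanishes by Proposition~\ref{Steinslemma}). Introduce a free semicircular family $\{s_{k,n}\}_{k}$ matching the first two moments of $\{\xi_{k,n}\}_{k}$, so $W_n:=\sum_k s_{k,n}\sim\mathbf{s}$, and interpolate between $X_n$ and $W_n$ one summand at a time. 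At the $k$-th swap, write $Z_\theta=e^{-\theta}\xi_{k,n}+R_{k,\theta}$; the dependency graph ensures $\xi_{k,n}$ is free from $R_{k,\theta}$ modulo its neighborhood $N_k$. Absorbing the neighborhood contribution into the composite variable $\xi_{N_k}=\sum_{i\in N_k}\xi_{i,n}$, a power-mean inequality for traces yields $m_{q+1}[\mu_{N_k}]\leq\mathcal{D}(E_n)^{q}\sum_{i\in N_k}m_{q+1}[\mu_{i,n}]$, accounting for $q$ of the $q+1$ powers of $\mathcal{D}(E_n)$ in the bound.

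The critical cancellation is then extracted by expanding $\mathcal{L}_\boxplus[Dh](Z_\theta,\tilde Z_\theta)$ as a non-commutative Taylor series in the swap variable, using the polynomials $\Delta(a,r)=2\mathfrak{s}[(z-a)r]-r^2$ and the universal expansion \eqref{eq:fullexpansion}. By Lemma~\ref{lem:momentmatchingPUrel}, all joint moments produced by $\xi_{k,n}$ and by $s_{k,n}$ agree through order $q$, so after pairing the two interpolation steps the first $q$ orders cancel pairwise and only the $(q+1)$-th order remainder survives. Taking $\sup$ over $\|Dh\|_\infty\leq 1$ and using the uniform support bound gives a contribution of order $\mathcal{D}(E_n)^{q+1}m_{q+1}[\mu_{k,n}]$ per summand, and the integral in $\theta$ converges by the exponential decay \eqref{eq:OUestimate}, producing the absolute constant $C$.

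The main obstacle will be the non-commutative bookkeeping of the Taylor remainder: since $\xi_{k,n}$ and $R_{k,\theta}$ do not commute and appear inside a double trace, the expansion must be performed carefully through the symmetrized monomials $\Upsilon_{1,\mathbbm{i}},\Upsilon_{2,\mathbbm{i}}$ of \eqref{eq:Upsilondef}, and one must verify that the dependency-graph freeness survives the swap so that the relevant mixed free cumulants of $\xi_{k,n}$ and $s_{k,n}$ genuinely cancel. The combinatorial identification of exactly which terms match---and hence the precise exponent $q+1$ in the power-counting of $\mathcal{D}(E_n)$---is the technical heart of the argument.
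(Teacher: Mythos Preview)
Your approach is plausible but differs substantially from the paper's. The paper does \emph{not} apply the dual Stein equation to $\nu_n$ at all; instead it first invokes the subadditivity of $d_W$ under free convolution (Lemma~\ref{Lemma:convolutioninequality}) to obtain
\[
d_W(\nu_n,\mathbf{s})\leq \sum_{V\in\mathcal{J}} d_W(\mu_V,\mathfrak{g}[\mu_V]),
\]
and then applies Proposition~\ref{prop:main} to each single component $\mu_V$ versus its semicircular $\mathfrak{g}[\mu_V]$. This localization eliminates the Lindeberg telescoping entirely: there is only one ``swap'' per $V$, namely $\xi_V^\theta$ against a semicircular $\psi_V^\theta$. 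The cancellation mechanism is also different from yours: after expanding via Lemma~\ref{lemma:atechone} and Proposition~\ref{prop:krewerasprop}, the paper observes that the free cumulants $\kappa_\pi[\xi_V^\theta,\dots]$ are $\theta$-independent by Lemma~\ref{lem:momentmatchingPUrel} and that $\psi_V^\theta\stackrel{d}{=}\psi_V^\infty$, so the lower-order pieces $\mathcal{Z}_{\theta,z}^i$ equal their $\theta=\infty$ values, which vanish by Stein's identity. Only the order-$(q{+}1)$ remainders $\tilde{\mathcal{E}}_{\theta,n},\hat{\mathcal{E}}_{\theta,n}$ survive.

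What each route buys: your global Lindeberg scheme stays closer to the proof of Theorem~\ref{thm:inhomogeneousBerryEsseen} and exploits super-convergence to work on a fixed rectangle, but pays with heavier bookkeeping inside the double trace and with tracking intermediate hybrid sums. The paper's route avoids super-convergence altogether for this theorem (the support of $\mu_V$ is trivially $\leq C\mathcal{D}(E_n)$, whence one factor of $\mathcal{D}(E_n)$ from the Cauchy contour perimeter and $q$ more from H\"older on $\tau[|\xi_V|^{q+1}]$), and reduces the combinatorics to a single free pair $(\xi_V^\theta,\psi_V^\theta)$. Both lead to the same bound, but the paper's localization via the $d_W$ convolution inequality is the cleaner organizing idea you are missing.
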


\section{Proofs of main results}\label{sec:proofofmainresults}
This section is devoted to the proof of Theorems \ref{thm:inhomogeneousBerryEsseen} and \ref{thm:inhomogeneousBerryEsseensecond}.
\subsection{Proof of Theorem \ref{thm:inhomogeneousBerryEsseen}}
By Proposition \ref{prop:main}, finding bounds for 
$\langle \mathbf{s},h\rangle-\langle \nu_n,h\rangle $
simplifies to bounding from above the term
\begin{align*}
\langle \mathcal{S}_{\boxplus}^{*}[\nu_n],\mathcal{L}_{\boxplus}[Dh]\rangle.
\end{align*}
We will first consider the case where $h\in \mathcal{C}^1(\R;\R)$ and $\|f\|_{\infty}\leq 1$. By \eqref{eq:Udefintegral},
\begin{align*}
\langle \mathcal{S}_{\boxplus}^{*}[\nu_n],\mathcal{L}_{\boxplus}[Dh]\rangle
  =\int_0^{\infty}\langle P_{\theta}^{*}[\nu_n]\otimes P_{\theta}^{*}[\nu_n]- \mathbf{s}\otimes \mathbf{s},\mathcal{L}_{\boxplus}[Dh]\rangle d\theta.
\end{align*}

The term $\langle \mathbf{s}\otimes \mathbf{s},\mathcal{L}_{\boxplus}[Dh]\rangle$ is equal to zero by Proposition \ref{Steinslemma}, so we can write 
\begin{multline*}
\langle P_{\theta}^{*}[\nu_{n}]\otimes P_{\theta}^{*}[\nu_{n}]- \mathbf{s}\otimes \mathbf{s},\mathcal{L}_{\boxplus}[Dh]\rangle\\
\begin{aligned}
  &=\int_{\R^{2}}\left(-xDh(x)+\frac{Dh(x)-Dh(y)}{x-y}\right)P_{\theta}[\nu_{n}](dx)P_{\theta}[\nu_{n}](dy).
\end{aligned}
\end{multline*}
The above identity, combined with Proposition \ref{prop:main} allows us to write 
\begin{align}\label{eq:auxmainthmone}
\langle \mathbf{s},h\rangle-\langle \nu_n,h\rangle 
  &=	\int_0^{\infty}\int_{\R^{2}}\left(-xDh(x)+\frac{Dh(x)-Dh(y)}{x-y}\right)P_{\theta}[\mu_{n}](dx)P_{\theta}[\mu_{n}](dy)d\theta. 
\end{align}
The rest of the proof will consist of finding sharp bounds for the right-hand side by making use of the Cauchy formula.\\

\noindent Recall that $\mathcal{J}$ denotes the set of equivalence classes of the dependency graph $E$. Denote by $\tilde{\mu}_V$, with $V\in \mathcal{J}$, the standardization of the analytic distribution of $\xi_{V}$. By the freeness of elements in different equivalence classes of $E$, we have that $\nu_n$ can be expressed as the free convolution of the $\tilde{\mu}_V$'s, with $V$ ranging over $\mathcal{J}$. By the triangle inequality, the support of the $\tilde{\mu}_V$ is contained in $\mathcal{D}R$, where $R$ is any positive number satisfying $\supp(\mu_k)\subset[-R,R]$. One can easily check that the validity of \cite[Theorem 7]{MR1355057} can be extended to the measure $\nu_n$,  so by the superconcentration criterion \cite[Theorem 3]{MR1355057}, there exists $N\geq 1$, such that the measure $\nu_n$ is supported in $[-3,3]$, yielding the condition $\supp P_{\theta}[\nu_{n}]\subset[-5,5]$. From here it follows that if $T:=\Indi{[-5,5]}$, then the push-forward measure $T_{\#}P_{\theta}[\nu_{n}]$ coincides with $P_{\theta}[\nu_{n}]$, which allows us to write 
\begin{multline*}
\langle P_{\theta}[\nu_{n}]\otimes P_{\theta}[\nu_{n}]- \mathbf{s}\otimes \mathbf{s},\mathcal{L}_{\boxplus}[Dh]\rangle\\
\begin{aligned}
  &=\int_{[-5,5]^{2}}\left(-xDh(x)+\frac{Dh(x)-Dh(y)}{x-y}\right) P_{\theta}[\nu_{n}](dx) P_{\theta}[\nu_{n}](dy).
\end{aligned}
\end{multline*}
Observe that for all $z\in[-5,5]$, the Cauchy integral formula yields 
\begin{align*}
h(x)
  =\frac{1}{2\pi\mathbf{i}}\int_{\mathcal{R}}\frac{h(z)}{z-x}dz, \ \ \ \ \ \ \ 
Dh(x)
  =\frac{1}{2\pi\mathbf{i}}\int_{\mathcal{R}}\frac{h(z)}{(z-x)^{2}}dz,	
\end{align*}
where $\mathcal{R}$ is the rectangle determined by the vertices $(\pm 6,\pm 1).$ From here it follows that 
\begin{multline*}
\langle P_{\theta}[\nu_{n}]\otimes P_{\theta}[\nu_{n}]- \mathbf{s}\otimes \mathbf{s},\mathcal{L}_{\boxplus}[Dh]\rangle\\
\begin{aligned}
    &=\frac{1}{2\pi\mathbf{i}}\int_{\mathcal{R}}h(z)\int_{[-5,5]^{2}}\left(-\frac{x}{(z-x)^{2}}-\frac{(x-z)+(y-z)}{(z-x)^{2}(z-y)^{2}}\right) P_{\theta}[\nu_{n}](dx) P_{\theta}[\nu_{n}](dy)dz.
\end{aligned}
\end{multline*}
Thus, by a symmetrization argument, 
\begin{multline*}
\langle P_{\theta}[\nu_{n}]\otimes P_{\theta}[\nu_{n}]- \mathbf{s}\otimes \mathbf{s},\mathcal{L}_{\boxplus}[Dh]\rangle\\
\begin{aligned}
    &=\frac{1}{2\pi\mathbf{i}}\int_{\mathcal{R}}h(z)\int_{[-5,5]^{2}}\left(-\frac{x}{(z-x)^{2}}-\frac{2(x-z)}{(z-x)^{2}(z-y)^{2}}\right) P_{\theta}[\nu_{n}](dx) P_{\theta}[\nu_{n}](dy)dz.
\end{aligned}
\end{multline*}
Using the fact that $\|h\|_{\infty}\leq 1$ and that the perimeter of $\mathcal{R}$ is 28, and then applying the bound \eqref{eq:auxmainthmone}, we deduce the bound 
\begin{multline}\label{eq:technicalone}
|\langle \mathbf{s},h\rangle-\langle \nu_n,h\rangle|=\langle \mathcal{S}_{\boxplus}^{*}[\nu_n],\mathcal{L}_{\boxplus}[Dh]\rangle\\
\begin{aligned}
	&=|\int_{\R_{+}}\langle P_{\theta}[\nu_{n}]\otimes P_{\theta}[\nu_{n}]- \mathbf{s}\otimes \mathbf{s},\mathcal{L}_{\boxplus}[Dh]\rangle d\theta|\\
    &\leq \frac{14}{\pi}\int_{\R_{+}}\sup_{z\in\mathcal{R}}\left|\int_{[-5,5]^{2}}\left(\frac{x}{(z-x)^2}+\frac{2(x-z)}{(z-x)^{2}(z-y)^{2}}\right) P_{\theta}[\nu_{n}](dx) P_{\theta}[\nu_{n}](dy)\right|d\theta.
\end{aligned}
\end{multline}
To handle the argument in the supremum appearing in the right, define the function 
$$g(x):=(z-x)^{-2}.$$ 
Relation \eqref{eq:technicalone} then  reads
\begin{multline*}
|\langle \mathbf{s},h\rangle-\langle \nu_n,h\rangle|\\
\begin{aligned}
    &\leq \frac{14}{\pi}\int_{\R_{+}}\sup_{z\in\mathcal{R}}\left|\int_{[-5,5]^{2}}\left(xg(x)+2(x-z)g(x)g(y)\right) P_{\theta}[\nu_{n}](dx) P_{\theta}[\nu_{n}](dy)\right|d\theta.
\end{aligned}
\end{multline*}
This expression allows us to transfer ideas from classical Stein method discussed in Section \ref{Section:ClassicalSteinMethod}, for transforming the expression 
\begin{align*}
\int_{[-5,5]}xg(x)P_{\theta}[\nu_{n}](dx),	
\end{align*}
into the negative of
\begin{align*}
\int_{[-5,5]^{2}}2(x-z)g(x)g(y) P_{\theta}[\nu_{n}](dx) P_{\theta}[\nu_{n}](dy),
\end{align*}
plus a quantifiable error. In order to transparently carry out this program, we first introduce some notation. Let $\{s_{V},\tilde{s}_{V}\ ;\ V\in\mathcal{J}\}$ be a sequence of free random variables with $s_{V}, \tilde{s}_V$ distributed according to $\mathfrak{g}[\mu_{\xi_{V}}]$. Define as well the variables 
\begin{align*}
\eta_{V}^{\theta}
  &:=e^{-\theta}\xi_{V}+\sqrt{1-e^{-2\theta}}s_{V},	
\end{align*}
as well as  
\begin{align*}
F_{\theta,n}
	:=\sum_{V\in\mathcal{J}}\eta_{V}^{\theta}
\ \ \ \ \ \ \ \
F_{\theta,n}^{V}
	:=\tilde{\eta}_{V}^{\theta}+\sum_{U\in\mathcal{J}\backslash\{V\}} \eta_{U}^{\theta},
\end{align*}
where the $\tilde{\eta}_{V}^{\theta}$ are free copies of the ${\eta}_{V}^{\theta}$. From the definition of these variables, the properties bellow follow directly
\begin{enumerate}
\item[-]	The variables  $\eta_{V}^{\theta} $ are free, and free from the $\tilde{\eta}_{V}^{\theta}$, as $V$ ranges over $\mathcal{J}$.
\item[-]	The analytic distribution of $\eta_{V}^{\theta}$ is  
$P_{\theta}[\mu_{V}]$, where $\mu_V$ denotes the analytic distribution of $\xi_{V}$.
\item[-]	The analytic distributions of $F_{\theta,n}$ and $F_{\theta,n}^V$ are equal to $P_{\theta}[\nu_n]$ for all $V\in\mathcal{J}$.
\end{enumerate}
Inspired in the procedure described in Section \ref{sec:steinmethodlologyNC}, we start from the expression
\begin{align*}
\int_{[-5,5]}xg(x)P_{\theta}[\nu_{n}](dx)
  &=\tau\left[\sum_{V\in\mathcal{J}}\eta_{V}^{\theta}g\left(F_{\theta,n}\right)\right].
\end{align*}
By the freeness of the $\eta_{V}^{\theta}$ against $\eta_{U}^{\theta}$, for $V\neq U$, we deduce that for all $V\in\mathcal{J}$,
$$\tau\left[\eta_{V}^{\theta}g\left(F_{\theta,n}^{V}\right)\right]=0,$$  
which leads to 
\begin{align*}
\int_{[-5,5]}xg(x)P_{\theta}[\nu_{n}](dx)
  &=\tau\left[\sum_{V\in\mathcal{J}}\eta_{V}^{\theta}\left(g\left(F_{\theta,n}\right)-g\left(F_{\theta,n}^{V}\right)\right)\right].
\end{align*}
We now approach the right-hand side using the framework of non-commutative differentiable calculus. It is worth noting that a significant simplification has been achieved by applying the Cauchy inversion formula. This transformation reduced the complexity of the expressions to be managed, as the differential calculus for the function $g$  is considerably simpler compared against the original function  $h$. In the sequel, $\zeta_{V}^{\theta}$ will denote the difference 
\begin{align*}
\zeta_{V}^{\theta}
  &:=\eta_{V}^{\theta}-\tilde{\eta}_V^{\theta}.	
\end{align*}
Denote by $\mathfrak{s}$ the symmetrization operator over polynomials in non-commutative variables and define the functions 
\begin{align*}
\Delta\left(a,r\right)
  &:=2\mathfrak{s}[(z-a)r]-r^2,
\end{align*}
for non-commutative variables $a,r\in\mathcal{A}$. By  Lemma \ref{lemma:atechone}, we have that
\begin{align}\label{eq:decompmainone}
\int_{[-5,5]}xg(x)P_{\theta}[\nu_{n}](dx)
  &=\sum_{V\in\mathcal{J}}\tau\left[\eta_{V}^{\theta}g(F_{\theta,n}^V) {\Delta}\left(F_{\theta,n}^V,\zeta_{V}^{\theta}\right)g(F_{\theta,n}^V))\right]+\mathcal{E}_{\theta,n},
\end{align}
where
\begin{align*}
\mathcal{E}_{\theta,n}(z)
  &:=\sum_{V\in\mathcal{J}}\tau\left[\eta_{V}^{\theta}g(F_{\theta,n})({\Delta}\left(F_{\theta,n}^V,\zeta_{V}^{\theta}\right)g(F_{\theta,n}^V))^2\right].
\end{align*}
Similarly, we can consider the decomposition
\begin{align*}
\int_{\R^{2}}\frac{2(x-z)}{(z-x)^{2}(z-y)^{2}}P_{\theta}[\nu_{n}](dx) P_{\theta}[\nu_{n}](dy)	
  &=2\tau[(F_{\theta,n}-z)g(F_{\theta,n})]\tau[g(F_{\theta,n})]\\
  &=\sum_{V\in\mathcal{J}}\tau[\eta_V^{\theta}\zeta_{V}^{\theta}]2\tau[(F_{\theta,n}-z)g(F_{\theta,n})]\tau[g(F_{\theta,n})].
\end{align*}
Now, using the fact that $F_{\theta,n}$ and $F_{\theta,n}^V$ are equal in law, we deduce the identity 
\begin{align*}
\int_{\R^{2}}\frac{2(x-z)}{(z-x)^{2}(z-y)^{2}}P_{\theta}[\nu_{n}](dx) P_{\theta}[\nu_{n}](dy)	
  &=\sum_{V\in\mathcal{J}}\tau[\eta_V^{\theta}\zeta_{V}^{\theta}]2\tau[(F_{\theta,n}^V-z)g(F_{\theta,n}^V)]\tau[g(F_{\theta,n}^V)].
\end{align*}
Moreover, by \eqref{eq:firstmomentsfreeness}, the right-hand side coincides with 
\begin{align*}
2\sum_{V\in\mathcal{J}}\tau\left[\eta_{V}^{\theta}g(F_{\theta,n}^V) \mathfrak{s}[(z-F_{\theta,n}^V){\zeta}_{V}^{\theta}]g(F_{\theta,n}^V)\right].
\end{align*}
From here it follows that the term 
\begin{align*}
\mathcal{Y}_{\theta,z}
  &:=\left|\int_{[-5,5]^{2}}\left(xg(x)+2(x-z)g(x)g(y)\right) P_{\theta}[\nu_{n}](dx) P_{\theta}[\nu_{n}](dy)\right|,
\end{align*}
satisfies 
\begin{align*}
\mathcal{Y}_{\theta,z}
	&=|-\sum_{V\in\mathcal{J}}\tau\left[\eta_{V}^{\theta}g(F_{\theta,n}^V)(\zeta_{V}^{\theta})^2 F_{\theta,n}^V\right]+\mathcal{E}_{\theta,n}|.
\end{align*}
By the non-commutative Stein lemma, $\mathcal{Y}_{\infty,z}=0$, so that 
\begin{align*}
\mathcal{Y}_{\theta,z}
  &=|\mathcal{Y}_{\theta,z}-\mathcal{Y}_{\infty,z}|\\
  &\leq \sum_{V\in\mathcal{J}}|\tau\left[\eta_{V}^{\theta}g(F_{\theta,n}^V)(\zeta_{V}^{\theta})^2 F_{\theta,n}^V\right]-\tau\left[\eta_{V}^{\infty}g(F_{\infty,n}^V)(\zeta_{V}^{\infty})^2 F_{\infty,n}^V\right]|\\
  &+\sum_{V\in\mathcal{J}}|\tau\left[\eta_{V}^{\theta}g(F_{\theta,n})({\Delta}\left(F_{\theta,n}^V,\zeta_{V}^{\theta}\right)F_{\theta,n}^V)^2\right]
  -\tau\left[\eta_{V}^{\infty}g(F_{\infty,n})({\Delta}\left(F_{\infty,n}^V,\zeta_{V}^{\infty}\right)F_{\infty,n}^V)^2\right]|.
\end{align*}
The boundedness of $g$, its derivatives and the support of $F_{\theta,n}^{V}$, 
\begin{align*}
\mathcal{Y}_{\theta,z}
  &\leq Ce^{-\theta}\sum_{V\in\mathcal{J}} \tau\left[|\eta_{V}^{\theta}|^3+|\zeta_V^{\theta}|^3\right].
  \end{align*}
From here we can easily deduce the existence of a (possibly different) constant $C>0$, such that 
\begin{align*}
\mathcal{Y}_{\theta,z}
  &\leq Ce^{-\theta}\sum_{V\in\mathcal{J}} \tau\left[|\xi_{V}^{\theta}|^3\right].
  \end{align*}
An application of the H\"older inequality then yields 
\begin{align*}
\mathcal{Y}_{\theta,z}
  &\leq C\mathcal{D}(E_n)^2e^{-\theta}\sum_{k=1}^n \tau\left[|\xi_{k,n}^{\theta}|^3\right].
  \end{align*}
Thus, by \eqref{eq:technicalone}, 
\begin{align*}
|\langle \mathbf{s},h\rangle-\langle \nu_n,h\rangle|
  &\leq C\mathcal{D}(E_n)^2\sum_{k=1}^n m_3[\mu_{k,n}].
\end{align*}
The result is obtained by taking supremum over $h$ differentiable, bounded by one.

\subsection{Proof of Theorem \ref{thm:inhomogeneousBerryEsseensecond}}\label{ref:inhomogeneousBerryEsseen23}
Observe that the measure $\mathbf{s}$ can be decomposed in the form 
\begin{align*}
\mathbf{s}
  &=\boxplus_{V\in\mathcal{J}}\mathfrak{g}[\mu_{V}].	
\end{align*}
Thus, by Lemma \ref{eq:keyineq1}, we have that 
\begin{align}\label{eq:dWboundtech2}
d_{W}(\nu_n,\mathbf{s})
  &\leq\sum_{V\in\mathcal{J}}d_W(\mu_{V},\mathfrak{g}[\mu_{V}]) 	
\leq\sum_{V\in\mathcal{J}}\mathbbm{d}_{W}(\mu_{V},\mathfrak{g}[\mu_{V}]).
\end{align}
By the Kantorovich-Rubenstein duality, 
\begin{align}\label{eq:dboundliotech2}
\mathbbm{d}_{W}(\mu_{V},\mathfrak{g}[\mu_{V}])
  =\sup_{\substack{h\in\mathcal{C}^1(\R;\R)\\ \|Dh\|_{\infty}\leq 1}}|\langle \mu_{V},h\rangle-\langle \mathfrak{g}[\mu_{V}],h\rangle|.
\end{align}
By Proposition \ref{prop:main}, finding bounds for 
$\langle \mu_{V},h\rangle-\langle \mathfrak{g}[\mu_{V}],h\rangle$
simplifies to bounding from above the term
\begin{align*}
\langle \mathcal{S}_{\boxplus}^{*}[\mu_V],\mathcal{L}_{\boxplus}[Dh]\rangle.
\end{align*}
This task now does not require the use of a superconvergence argument, since the support of $\mu_V$ is contained in the ball of radius $C\mathcal{D}(E_n)$, for some $C>0$. By the Cauchy inversion formula, 
\begin{align*}
h(x)
  =\frac{1}{2\pi\mathbf{i}}\int_{\mathcal{R}}\frac{h(z)}{z-x}dz, \ \ \ \ \ \ \ 
Dh(x)
  =\frac{1}{2\pi\mathbf{i}}\int_{\mathcal{R}}\frac{h(z)}{(z-x)^{2}}dz,	
\end{align*}
where now $\mathcal{R}$ is the rectangle determined by the vertices $(\pm \mathcal{B},\pm 1).$, where $\mathcal{B}:=C\mathcal{D}(E)+1.$ Following a line of reasoning analogous to the proof of Theorem \ref{thm:inhomogeneousBerryEsseen}, we can show that 
\begin{align}\label{eq:technicalone2sasd}
|\langle \mathbf{s},h\rangle-\langle \mu_V,h\rangle|
    &\leq \frac{ \mathcal{B}}{2\pi}\int_{\R_{+}}\sup_{z\in\mathcal{R}}\mathcal{Z}_{\theta,z}d\theta,
\end{align}
where $\mathcal{Z}_{\theta,z}$ is now defined by 
\begin{align*}
\mathcal{Z}_{\theta,z}
  &:=\left|\int_{[-\mathcal{B},\mathcal{B}]^{2}}\left(xg(x)+2(x-z)g(x)g(y)\right) P_{\theta}[\mu_{V}](dx) P_{\theta}[\mu_{V}](dy)\right|.
\end{align*}
As before, we define the non-commutative random variables
\begin{align*}
\xi_{V}^{\theta}
  &:=e^{-\theta}\xi_{V}+\sqrt{1-e^{-2\theta}}s_{V},	
\end{align*}
where $s_{V}$ is distributed according to $\mathfrak{g}[\mu_{V}]$. We can couple the variables $\xi_V$ with semicircular variables $\psi_V$, in such a way that
 \begin{align*}
 \tau[|\xi_V-\psi_V|]	
   &=d_{W}(\mu_V,\mathfrak{g}[\mu_V]).
 \end{align*}
Without loss of generality, we can assume that the $\psi_V$ are free from the $s_V$. Finally, we define
\begin{align*}
\psi_{V}^{\theta}
  &:=e^{-\theta}\psi_{V}+\sqrt{1-e^{-2\theta}}s_{V}.	
\end{align*}
Observe that the distribution of $\psi_V^{\theta}$ is semicircular with mean zero and variance $\tau[\psi_V^2]$. In particular, the law of $\psi_{V}^{\theta}$ coincides with the law of $\psi_{V}^{\infty}$. The centering condition of $\xi_V^{\theta}$ allows us to write 
\begin{align*}
\int_{[-\mathcal{B},\mathcal{B}]}xg(x)P_{\theta}[\mu_{V}](dx)
  &=\tau\left[ \xi_{V}^{\theta}(g\left(\xi_{V}^{\theta}\right)-g (\psi_V^{\theta} ))\right].
\end{align*}
Proceeding as before, after an application of Lemma \ref{lemma:atechone}, we obtain
\begin{align}\label{eq:decompmainone}
\int_{[-\mathcal{B},\mathcal{B}]}xg(x)P_{\theta}[\nu_{n}](dx)
  &=\sum_{1\leq j\leq q-1}\tau\left[\xi_{V}^{\theta}g(\psi_V^{\theta})({\Delta} (\psi_V^{\theta},\xi_{V}^{\theta}-\psi_V^{\theta} )g(\psi_V^{\theta}))^j\right]+\tilde{\mathcal{E}}_{\theta,n},
\end{align}
where
\begin{align*}
\tilde{\mathcal{E}}_{\theta,n}^1(z)
  &:=\tau\left[\xi_{V}^{\theta}g(\xi_V^{\theta})({\Delta}\left(\psi_V^{\theta},\xi_{V}^{\theta}-\psi_{V}^{\theta}\right)g(\psi_V^{\theta}))^q\right].
\end{align*}

Similarly, by first writing 
\begin{align*}
\int_{\R^{2}}\frac{2(x-z)}{(z-x)^{2}(z-y)^{2}}P_{\theta}[\mu_{V}](dx) P_{\theta}[\mu_{V}](dy)	
  &=2\tau[(\xi_{V}^{\theta}-z)g(\xi_{V}^{\theta})]\tau[g(\xi_{V}^{\theta})],
\end{align*}
and then applying Lemma \ref{lemma:atechone}, we obtain
\begin{multline*}
\int_{\R^{2}}\frac{2(x-z)}{(z-x)^{2}(z-y)^{2}}P_{\theta}[\mu_{V}](dx) P_{\theta}[\mu_{V}](dy)\\
\begin{aligned}	
  &=\hat{\mathcal{E}}_{\theta,n}+2\sum_{0\leq j_1,j_2\leq q-1}\tau\left[(\xi_{V}^{\theta}-z)g(\psi_V^{\theta})(\Delta(\psi_V^{\theta},\xi_V^{\theta}-\psi_V^{\theta})g(\psi_V^{\theta}))^{j_1}\right]\\
  &\times \tau\left[g(\psi_V^{\theta})(\Delta(\psi_V^{\theta},\xi_V^{\theta}-\psi_V^{\theta})g(\psi_V^{\theta}))^{j_2}\right],
\end{aligned}
\end{multline*}
where 
\begin{align*}
\hat{\mathcal{E}}_{\theta,n}
&:=2\sum_{0\leq j\leq q-1}\tau\left[(\xi_{V}^{\theta}-z)g(\psi_V^{\theta})(\Delta(\psi_V^{\theta},\xi_V^{\theta}-\psi_V^{\theta})g(\psi_V^{\theta}))^{j}\right]\tau\left[g(\psi_V^{\theta})(\Delta(\psi_V^{\theta},\xi_V^{\theta}-\psi_V^{\theta})g(\psi_V^{\theta}))^{q}\right]\\
  &+2\sum_{0\leq j\leq q-1}\tau\left[(\xi_{V}^{\theta}-z)g(\psi_V^{\theta})(\Delta(\psi_V^{\theta},\xi_V^{\theta}-\psi_V^{\theta})g(\psi_V^{\theta}))^{q}\right]\tau\left[g(\psi_V^{\theta})(\Delta(\psi_V^{\theta},\xi_V^{\theta}-\psi_V^{\theta})g(\psi_V^{\theta}))^{j}\right].
\end{align*}
From here it follows that 
\begin{align*}
\mathcal{Z}_{\theta,z}
	&=\mathcal{Z}_{\theta,z}^1+\mathcal{Z}_{\theta,z}^2+\mathcal{Z}_{\theta,z}^3+\tilde{\mathcal{E}}_{\theta,n}
	+\hat{\mathcal{E}}_{\theta,n},
\end{align*}
where 
\begin{align*}
\mathcal{Z}_{\theta,z}^1
	&:=-\tau\left[\xi_{V}^{\theta}g(\psi_V^{\theta})(\xi_{V}^{\theta}-\psi_V^{\theta} )^2g(\psi_V^{\theta})\right]\\
\mathcal{Z}_{\theta,z}^2
	&:=\sum_{2\leq j\leq q-1}\tau\left[\xi_{V}^{\theta}g(\psi_V^{\theta})({\Delta} (\psi_V^{\theta},\xi_{V}^{\theta}-\psi_V^{\theta} )g(\psi_V^{\theta}))^j\right]\\
\mathcal{Z}_{\theta,z}^3
	&:=2\sum_{\substack{0\leq j_1,j_2\leq q-1\\j_1j_2\neq 0}}\tau\left[(\xi_{V}^{\theta}-z)g(\psi_V^{\theta})(\Delta(\psi_V^{\theta},\xi_V^{\theta}-\psi_V^{\theta})g(\psi_V^{\theta}))^{j_1}\right] \tau\left[g(\psi_V^{\theta})(\Delta(\psi_V^{\theta},\xi_V^{\theta}-\psi_V^{\theta})g(\psi_V^{\theta}))^{j_2}\right].
\end{align*}
Using \eqref{eq:fullexpansion}, we can write 
\begin{align*}
\mathcal{Y}_{\theta,z}^1
  &=-\tau\left[(\xi_{V}^{\theta})^3]\tau[g(\psi_V^{\theta})]\tau[g(\psi_V^{\theta})\right]+2\tau\left[(\xi_{V}^{\theta})^2]\tau[g(\psi_V^{\theta})]\tau[\psi_V^{\theta} g(\psi_V^{\theta})\right],	
\end{align*}
as well as
\begin{align*}
\mathcal{Y}_{\theta,z}^2
	&=\sum_{2\leq j\leq q-1}\sum_{\mathbbm{i}\in\mathcal{I}}\mathfrak{f}_{j,\mathbbm{i}}^1\mathfrak{Q}_{j,\mathbbm{i}}^1(z)\tau\left[\xi_{V}^{\theta}g(\psi_V^{\theta})	\Upsilon_{1,\mathbbm{i}}(\psi_V^{\theta},\xi_{V}^{\theta}
  )\right]\\
  &+\sum_{2\leq j\leq q-1}\sum_{\mathbbm{i}\in\mathcal{I}}\mathfrak{f}_{j,\mathbbm{i}}^2	\mathfrak{Q}_{j,\mathbbm{i}}^2(z)\tau\left[\xi_{V}^{\theta}g(\psi_V^{\theta})\Upsilon_{2,\mathbbm{i}}(\psi_V^{\theta},\xi_{V}^{\theta})\right],
\end{align*}
and 
\begin{align*}
\mathcal{Y}_{\theta,z}^3
	&=2\sum_{\substack{0\leq j_1,j_2\leq q-1\\j_1j_2\neq 0}}\sum_{\mathbbm{i},\mathbbm{j}\in\mathcal{I}}\tau\left[(\xi_{V}^{\theta}-z)g(\psi_V^{\theta})(\mathfrak{f}_{j_1,\mathbbm{i}}^1\mathfrak{Q}_{j_1,\mathbbm{i}}^1(z)\Upsilon_{1,\mathbbm{i}}(\psi_V^{\theta},\xi_V^{\theta})
	+\mathfrak{f}_{j_1,\mathbbm{i}}^2\mathfrak{Q}_{j_1,\mathbbm{i}}^2(z)\Upsilon_{2,\mathbbm{i}}(\psi_V^{\theta},\xi_V^{\theta}))\right]\\
	&\times \tau\left[g(\psi_V^{\theta})\mathfrak{f}_{j_2,\mathbbm{j}}^1\mathfrak{Q}_{j_2,\mathbbm{j}}^1(z)\Upsilon_{1,\mathbbm{j}}(\psi_V^{\theta},\xi_V^{\theta})
	+\mathfrak{f}_{j_2,\mathbbm{j}}^2\mathfrak{Q}_{j_2,\mathbbm{j}}^2(z)\Upsilon_{2,\mathbbm{j}}(\psi_V^{\theta},\xi_V^{\theta}))\right],
\end{align*}
where $\Upsilon_{j,\mathbbm{i}}$ are given by \eqref{eq:Upsilondef}. Next we observe that by Proposition \ref{prop:krewerasprop}
\begin{align*}
\tau\left[\xi_{V}^{\theta}g(\psi_{V}^{\theta})	\Upsilon_{1,\mathbbm{i}}(\psi_{V}^{\theta},\zeta_{V}^{\theta}))
  )\right]
    &=\sum_{\pi\in NC(n)}\kappa_{\pi}[\xi_{V}^{\theta},(\xi_{V}^{\theta})^{i_2},\dots, (\xi_V^{\theta})^{i_{|\mathbbm{i}|}}]\\
    &\times\tau_{K(\pi)}[g(\psi_{V}^{\theta})(\psi_{V}^{\theta})^{i_1},(\psi_{V}^{\theta})^{i_3},\dots, (\psi_{V}^{\theta})^{i_{|\mathbbm{i}|-1}}].	
\end{align*}
By Lemma \ref{lem:momentmatchingPUrel}, the term $\kappa_{\pi}[\xi_{V}^{\theta},(\xi_{V}^{\theta})^{i_2},\dots, (\xi_V^{\theta})^{i_{|\mathbbm{i}|}}]$ doesn't depend on $\theta$. Moreover, using the fact that $P_{\theta}[\psi_V^{\theta}]$ is equal in law to $\psi_V^{\theta}$. From here we obtain 
\begin{align*}
\tau\left[\xi_{V}^{\theta}g(\psi_{V}^{\theta})	\Upsilon_{1,\mathbbm{i}}(\psi_{V}^{\theta},\zeta_{V}^{\theta}))
  )\right]
    &=\tau\left[\xi_{V}^{\infty}g(\psi_{V}^{\infty})	\Upsilon_{1,\mathbbm{i}}(\psi_{V}^{\infty},\zeta_{V}^{\infty}))
  )\right].	
\end{align*} 
Proceeding similarly, 
\begin{align*}
\tau\left[\xi_{V}^{\theta}g(\psi_V^{\theta})\Upsilon_{2,\mathbbm{i}}(\psi_V^{\theta},\xi_{V}^{\theta})\right]
  &=\tau\left[\xi_{V}^{\infty}g(\psi_V^{\infty})\Upsilon_{2,\mathbbm{i}}(\psi_V^{\infty},\xi_{V}^{\infty})\right],
\end{align*}
for $i=2$. By a similar argument, we can deduce that the identity is valid for $i=1,3$ as well. By Stein's identity, we have that $\mathcal{Y}_{\infty,z}=0$, thus implying
\begin{align*}
|\mathcal{Y}_{\theta,z}|
	&=|\mathcal{Y}_{\theta,z}-\mathcal{Y}_{\infty,z}|=|\tilde{\mathcal{E}}_{\theta,n}-\tilde{\mathcal{E}}_{\infty,n}|
	+|\hat{\mathcal{E}}_{\theta,n}-\hat{\mathcal{E}}_{\infty,n}|,	
\end{align*}
The two terms in the right-hand side are handled in a similar fashion, so we will restrict ourselves to the estimation of $\tilde{\mathcal{E}}_{\theta,n}^1$. First we write 
\begin{align*}
|\tilde{\mathcal{E}}_{\theta,n}^1|
  &\leq |\tau\left[(\xi_{V}^{\theta}-\xi_{V}^{\infty})g(\psi_V^{\theta})({\Delta}\left(\psi_V^{\theta},\xi_{V}^{\theta}\right)g(\psi_V^{\theta}))^q\right]|\\
  &+ |\tau\left[\xi_{V}^{\theta}(g(\psi_V^{\theta})-g(\psi_V^{\infty}))({\Delta}\left(\psi_V^{\theta},\xi_{V}^{\theta}\right)g(\psi_V^{\theta}))^q\right]|\\
  &+ |\tau\left[\xi_{V}^{\theta}g(\psi_V^{\theta})(({\Delta}\left(\psi_V^{\theta},\xi_{V}^{\theta}\right)g(\psi_V^{\theta}))^q
  -({\Delta}\left(\psi_V^{\infty},\xi_{V}^{\theta}\right)g(\psi_V^{\theta}))^q\right]|\\
&+ |\tau\left[\xi_{V}^{\theta}g(\psi_V^{\theta})(({\Delta}\left(\psi_V^{\theta},\xi_{V}^{\theta}\right)g(\psi_V^{\theta}))^q
  -({\Delta}\left(\psi_V^{\theta},\xi_{V}^{\infty}\right)g(\psi_V^{\theta}))^q\right]|\\
  &+ |\tau\left[\xi_{V}^{\theta}g(\psi_V^{\theta})(({\Delta}\left(\psi_V^{\theta},\xi_{V}^{\theta}\right)g(\psi_V^{\theta}))^q
  -({\Delta}\left(\psi_V^{\theta},\xi_{V}^{\theta}\right)g(\psi_V^{\infty}))^q\right]|.
\end{align*}
By the boundedness of $g$, and the fact that 
\begin{align*}
|\tilde{\mathcal{E}}_{\theta,n}^1|
  &\leq Ce^{-\theta}\tau\left[(|\psi_V|+|\xi_V|)^{q+1}\right].
\end{align*}
From here we can easily check that  
\begin{align*}
|\tilde{\mathcal{E}}_{\theta,n}^1-\tilde{\mathcal{E}}_{\infty,n}^1|
  &\leq Ce^{-\theta}\tau\left[ |\xi_V|^{q+1}\right],
\end{align*}
for a possibly different constant $C>0$. Proceeding similarly, we obtain 
\begin{align*}
|\hat{\mathcal{E}}_{\theta,n}^1-\hat{\mathcal{E}}_{\infty,n}^1|
  &\leq Ce^{-\theta}\tau\left[ |\xi_V|^{q+1}\right].
\end{align*}
By \eqref{eq:technicalone2sasd}, we then deduce that 
\begin{align*}
|\langle \mathbf{s},h\rangle-\langle \mu_V,h\rangle|
    &\leq C (\mathcal{D}(E_n)+1)\tau\left[ |\xi_V|^{q+1}\right].
\end{align*}
Summing over $V\in\mathcal{J}$ and using \eqref{eq:dWboundtech2} and \eqref{eq:dboundliotech2} and the  H\"older inequality, we get
\begin{align*}
d_{W}(\nu_n,\mathbf{s})
  &\leq C (\mathcal{D}(E_n)+1)\sum_{V\in\mathcal{J}}\tau\left[ |\xi_V|^{q+1}\right]	
  \leq C (\mathcal{D}(E_n)+1)^{q+1}\sum_{k=1}^nm_{q+1}[\mu_{k,n}]	,
\end{align*}
as required.

\section{Technical lemmas}\label{sec:technicallemas}
The decomposition below gives a Taylor-type expansion for non-commutative variables when the underlying function under consideration is $g(x):=(z-x)^2$, for some $z$ lying in the upper half-plane. Recall that $\mathfrak{s}$ is defined to be the symmetrization operator acting over the set of polynomials over non-commutative variables.
\begin{lemma}\label{lemma:atechone}
For non-commutative variables $a,r$, define 
\begin{align*}
\Delta\left(a,r\right)
  &:=2\mathfrak{s}[(z-a)r]-r^2,
\end{align*}
where $\mathfrak{s}$ denotes the symmetrization operator. Then, for all $q\geq 1$,
\begin{align*}
g\left(a+r\right)
  &=g\left(a+r\right)\left(\Delta\left(a,r\right) g (a)\right)^{q}+\sum_{j=0}^{q-1}g\left(a\right)\left(\Delta\left(a,r\right)g\left(a\right)\right)^j,
\end{align*}	
	
\end{lemma}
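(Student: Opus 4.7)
The plan is to reduce the expansion to a one-step resolvent identity and then iterate. With $g$ taken to be the reciprocal $(z-x)^{-2}$ used in the applications (so that $g(a)^{-1} = (z-a)^{2}$), the key algebraic observation is that the polynomial $\Delta(a,r)$ has been designed precisely as the second-order increment of the inverse, namely
\begin{align*}
g(a+r)^{-1} = (z-a-r)^{2} = (z-a)^{2} - (z-a)r - r(z-a) + r^{2} = g(a)^{-1} - \Delta(a,r),
\end{align*}
where I used that $2\mathfrak{s}[(z-a)r] = (z-a)r + r(z-a)$. Everything else is bookkeeping around this single identity.

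Next I would multiply the identity on the left by $g(a+r)$ and on the right by $g(a)$, which, after cancellation, yields
\begin{align*}
g(a) = g(a+r) - g(a+r)\,\Delta(a,r)\,g(a),
\end{align*}
equivalently the one-step expansion
\begin{align*}
g(a+r) = g(a) + g(a+r)\,\Delta(a,r)\,g(a).
\end{align*}
The order in which we multiply is the only subtle point: non-commutativity makes the mirror identity $g(a+r) = g(a) + g(a)\,\Delta(a,r)\,g(a+r)$ genuinely different, and only the chosen order places $g(a+r)$ to the left of the powers of $\Delta(a,r)\,g(a)$, matching the form required by the statement.

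The remaining step is an induction on $q$. The base case $q=1$ is precisely the one-step identity. For the inductive step I substitute the one-step identity into the leading factor of the error term to get
\begin{align*}
g(a+r)(\Delta(a,r)\,g(a))^{q} = g(a)(\Delta(a,r)\,g(a))^{q} + g(a+r)(\Delta(a,r)\,g(a))^{q+1},
\end{align*}
and reinsert this into the level-$q$ identity to obtain the level-$(q+1)$ identity. The main obstacle is purely notational: keeping each factor on the correct side so the iteration produces the claimed form rather than a mirror image or hybrid. No analytic subtlety intervenes, since the resolvent factors are bounded for $z$ off the real axis, so all products and substitutions are legitimate at the algebraic level.
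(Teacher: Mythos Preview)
Your proof is correct and follows essentially the same approach as the paper: both argue by induction on $q$, using the $q=1$ identity as the base case and substituting it into the leading error term for the inductive step. You are in fact more explicit than the paper, which simply asserts that the case $q=1$ ``follows from a direct computation'' --- your derivation of $g(a+r)^{-1}=g(a)^{-1}-\Delta(a,r)$ and the subsequent left/right multiplication is exactly that computation spelled out.
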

\begin{proof}
The case $q=1$ follows from a direct computation. For the general case, we use assume the validity of the identity for $q$, and then observe that 
\begin{align*}
g\left(a+r\right)
  &=g\left(a+r\right)\left(\Delta\left(a,r\right) g (a)\right)^{q}+\sum_{j=0}^{q-1}g\left(a\right)\left(\Delta\left(a,r\right)g\left(a\right)\right)^j.
\end{align*}
The induction hypothesis for $q=1$, yields
\begin{align*}
g\left(a+r\right)
  &=g\left(a+r\right)\Delta\left(a,r\right) g (a)+ g\left(a\right).
\end{align*}
A combination of these two identities gives the desired result.
\end{proof}

\noindent\textbf{Acknowledgements}\\
We thank Octavio Arizmendi for helpful guidance in the elaboration of this paper. Arturo Jaramillo Gil was supported by the grants  CB-2017-2018-A1-S-9764 and  CBF2023-2024-2088.

\bibliographystyle{plain}
\bibliography{bibliography}

\end{document}